\documentclass[11pt]{article}

\usepackage[nottoc,notlot,notlof]{tocbibind}

\usepackage{amsmath,amssymb,amsfonts,amsthm}
\usepackage{latexsym}
\usepackage{tikz}
\usetikzlibrary{automata,positioning}
\usetikzlibrary{chains,fit,shapes}
\usetikzlibrary{calc}
\usetikzlibrary{arrows}
\usetikzlibrary{positioning,calc}
\usetikzlibrary{graphs}
\usetikzlibrary{graphs.standard}
\usetikzlibrary{arrows,decorations.markings}

\newtheorem{theorem}{Theorem}
\newtheorem{lemma}[theorem]{Lemma}

\newtheorem{problem}{Problem}

\newtheorem{conjecture}{Conjecture}

\title{On word-representability of simplified de Bruijn graphs}

\author{Anthony Petyuk\footnote{College of Letters and Science, University of California, Berkeley, California, USA. \\ {\bf Email:} anthony.petyuk@berkeley.edu}}

\begin{document}  

\maketitle

\abstract{A graph $G=(V,E)$ is word-representable if there exists a word $w$
over the alphabet $V$ such that letters $x$ and $y$ alternate in $w$ if and only if $xy\in E$. Word-representable graphs generalize several important classes of graphs such as $3$-colorable graphs, circle graphs, and comparability graphs. There is a long line of research in the literature dedicated to word-representable graphs. 

In this paper, we study word-representability of simplified de Bruijn graphs. The simplified de Bruijn graph $S(n,k)$ is a simple graph obtained from the de Bruijn graph $B(n,k)$ by removing orientations and loops and replacing multiple edges between a pair of vertices by a single edge. De Bruijn graphs are a key object in combinatorics on words that found numerous applications, in particular, in genome assembly. We show that binary simplified de Bruijn graphs (i.e.\ $S(n,2)$) are word-representable for any $n\geq 1$, while $S(2,k)$ and $S(3,k)$ are non-word-representable for $k\geq 3$. We conjecture that all simplified de Bruijn graphs $S(n,k)$ are non-word-rerpesentable for $n\geq 4$ and $k\geq 3$.}  

\section{Introduction}\label{sec1}

\subsection{Word-representable graphs and semi-transitive orientations.} Two letters $x$ and $y$ {\em alternate} in a word $w$ if after deleting in $w$ all letters but the copies of $x$ and $y$ we either obtain a word $xyxy\cdots$ (of even or odd length) or a word $yxyx\cdots$ (of even or odd length). A graph $G=(V,E)$ is word-representable if there exists a word $w$
over the alphabet $V$ such that letters $x$ and $y$ alternate in $w$ if and only if $xy\in E$. 

There is a long line of research papers dedicated to the theory of word-representable graphs (e.g.\ see \cite{K17,KL15} and references therein). The motivation to study these graphs is their relevance to algebra, graph theory, computer science, combinatorics on words, and scheduling \cite{KL15}. In particular, word-representable graphs generalize several important classes of graphs (e.g. circle graphs, $3$-colorable graphs and comparability graphs). By \cite{K17,KL15}, the minimal (by the number of vertices) non-word-representable graph is the wheel graph $W_5$ on six vertices (which is obtained by adding to the cycle graph $C_5$ an all-adjacent vertex; see Figure~\ref{W5}).

An orientation of a graph is {\it semi-transitive} if it is acyclic, and for any directed path $v_0\rightarrow v_1\rightarrow \cdots\rightarrow v_k$ either there is no arc between $v_0$ and $v_k$, or $v_i\rightarrow v_j$ is an arc for all $0\leq i<j\leq k$. An undirected graph is semi-transitive if it admits a semi-transitive orientation. A key result in the theory of word-representable graphs is the following theorem.

\begin{theorem}[\cite{HKP16}]\label{key-thm} A graph is word-representable if and only if it is semi-transitive. \end{theorem}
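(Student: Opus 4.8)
The plan is to prove the two implications separately. I expect the forward direction (word-representable $\Rightarrow$ semi-transitive) to be routine and the reverse direction (semi-transitive $\Rightarrow$ word-representable) to carry the real difficulty. For the forward direction, suppose $w$ represents $G$. First I would orient each edge $xy$ from $x$ to $y$ precisely when the first occurrence of $x$ in $w$ precedes the first occurrence of $y$. Since distinct letters have distinct first-occurrence positions, this orientation is consistent with the linear order of first occurrences and is therefore acyclic. To verify semi-transitivity I would argue by contradiction: assume the orientation contains a \emph{shortcut}, i.e.\ a directed path $v_0\to v_1\to\cdots\to v_k$ together with the arc $v_0\to v_k$ but with some pair $v_i,v_j$ ($i<j$) non-adjacent, and choose such a configuration with $k$ minimal. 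A standard minimality analysis shows $k\ge 3$ and forces a restricted form (collapsing a chord $v_0\to v_i$, when present, yields a shorter directed path with the same endpoints and chord, hence a transitive tournament by minimality). The remaining non-adjacent pair means those two letters fail to alternate in $w$; combined with the alternations forced along the path and at the chord $v_0 v_k$, this produces a forbidden subword, contradicting that $w$ represents $G$. Reducing first to a uniform representing word streamlines this bookkeeping.

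For the reverse direction I would use induction on $|V|$. A semi-transitive orientation is acyclic, so it has a sink $z$, and every neighbour of $z$ is then an in-neighbour. Deleting $z$ leaves an induced orientation that is again semi-transitive, since any shortcut in $G-z$ would already be a shortcut in $G$; so by the inductive hypothesis $G-z$ is represented by some word $w'$. The heart of the argument is to \emph{insert} copies of $z$ into $w'$ so that $z$ alternates with exactly the letters of $N(z)$ and with no other letter, while leaving every alternation already encoded by $w'$ intact. This is where I expect the main obstacle to lie: a careless insertion either breaks the alternation of $z$ with a genuine neighbour, or accidentally creates alternation with a non-neighbour, or disturbs the representation of $G-z$.

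The feature that makes a consistent insertion possible is exactly the absence of shortcuts. Because $z$ is a sink, its in-neighbourhood is well behaved: if $u,u'\in N(z)$ and there is a directed path $u=u_0\to u_1\to\cdots\to u_m=u'$, then appending the arc $u_m\to z$ gives a directed path from $u$ to $z$ whose endpoints already span the arc $u\to z$, so semi-transitivity forces every $u_i$ into $N(z)$ and forces all the chords $u_i\to u_j$ to be present; in particular $G[N(z)]$ carries a transitive orientation. I would exploit this structure to choose, relative to the occurrence pattern of the neighbours in $w'$, a coherent family of slots into which copies of $z$ are placed, and then check letter by letter that the result represents $G$. Proving that such a placement always exists, i.e.\ that the local constraints imposed by different neighbours never conflict, is the crux of the whole theorem, and it is precisely the no-shortcut hypothesis that excludes the conflicting configurations.
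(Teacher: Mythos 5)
Note first that the paper does not prove this theorem at all: it is imported verbatim from \cite{HKP16}, so your proposal has to be measured against the argument in that reference. Your forward direction---pass to a uniform representant, orient each edge toward the letter whose first occurrence comes later, observe acyclicity, and rule out a minimal shortcut by translating it into an impossible alternation pattern---is essentially the standard argument and is acceptable as a sketch.

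The reverse direction, however, contains a genuine gap precisely where you yourself locate the crux. Two concrete problems. First, the induction hypothesis ``$G-z$ is represented by \emph{some} word $w'$'' is too weak: whether copies of $z$ can be inserted into $w'$ so that $z$ alternates with exactly $N(z)$ depends on which representant $w'$ you start from, and an arbitrary $w'$ need not admit any valid insertion. This is not a phantom worry---adding one vertex adjacent to all of $C_5$ turns a word-representable graph into the non-word-representable $W_5$, so ``insert a new letter alternating with a prescribed set'' is genuinely obstructed in general, and the obstruction is not removed merely by knowing that $N(z)$ carries a transitive orientation. The proof in \cite{HKP16} deals with this by proving a stronger, quantitative statement (a semi-transitive graph on $n$ vertices with clique number $\kappa$ is $2(n-\kappa)$-representable) and by constructing the representant explicitly at each step, so that the occurrences of the neighbours of the reinstated vertex are arranged in permutations compatible with the transitive orientations of its in- and out-neighbourhoods; the compatibility of the inductively built word with the orientation is part of what is carried through the induction, not something recovered afterwards from an arbitrary $w'$. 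Second, even granting your (correct) observation that the in-neighbourhood of a sink is transitively oriented, you never exhibit the ``coherent family of slots,'' nor verify that the insertion preserves every alternation of $w'$ and creates none between $z$ and a non-neighbour; you explicitly defer this step, but it is the entire content of the hard direction. As it stands the proposal is a plan rather than a proof: to complete it you would need to (i) strengthen the inductive statement so that the representant of $G-z$ is guaranteed compatible with a linear extension of the transitive order on $N(z)$, and (ii) write down and check the explicit insertion.
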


As an immediate corollary to Theorem~\ref{key-thm} we have the following result.

\begin{theorem}[\cite{HKP16}]\label{3-col} Any $3$-colorable graph is word-representable. \end{theorem}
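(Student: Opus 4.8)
The plan is to invoke Theorem~\ref{key-thm}: since word-representability is equivalent to semi-transitivity, it suffices to exhibit a semi-transitive orientation of an arbitrary $3$-colorable graph. So I would fix a graph $G=(V,E)$ together with a proper $3$-coloring $c\colon V\to\{1,2,3\}$, meaning $c(x)\neq c(y)$ whenever $xy\in E$, and construct the orientation directly from $c$.

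The natural orientation to try is the one induced by the coloring: orient each edge $xy\in E$ from its lower-colored endpoint to its higher-colored endpoint, i.e.\ set $x\to y$ precisely when $c(x)<c(y)$. This is well-defined because adjacent vertices receive distinct colors. First I would observe that the color value strictly increases along every directed walk, which immediately forces the orientation to be acyclic, since a directed cycle would have to return to its starting color.

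The crucial---and in this case very short---step is verifying the shortcut condition. Because colors strictly increase along directed paths and there are only three colors, any directed path $v_0\to v_1\to\cdots\to v_k$ has at most three vertices, so $k\le 2$. The cases $k\le 1$ are vacuous. When $k=2$ the colors along the path must be exactly $c(v_0)=1$, $c(v_1)=2$, $c(v_2)=3$; if in addition there is an arc between $v_0$ and $v_k=v_2$, it must be oriented $v_0\to v_2$ since $1<3$, and then all three arcs $v_0\to v_1$, $v_1\to v_2$, $v_0\to v_2$ are present, which is exactly what the definition of semi-transitivity requires. Hence the orientation is semi-transitive, and by Theorem~\ref{key-thm} the graph $G$ is word-representable.

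The only place where something could go wrong is the shortcut condition, which is precisely where non-semi-transitive orientations typically fail. I expect the length bound to do all the work: restricting to three colors caps the length of any directed path at two, so the one potentially dangerous configuration---a directed path together with a chord from its first to its last vertex---collapses into a transitive triangle and cannot violate the definition. With more than three colors this bound is lost, so the argument is genuinely special to the $3$-colorable case.
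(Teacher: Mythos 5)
Your argument is correct and is exactly the standard derivation the paper alludes to when it calls this an ``immediate corollary'' of Theorem~\ref{key-thm}: orient edges from lower to higher color, note that directed paths have length at most two, and observe that the only possible shortcut configuration is a transitively oriented triangle. Nothing is missing.
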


Finally, we note the handy software to work with word-representable graphs~\cite{G,S}.

\subsection{Simplified de Bruijn graphs.}\label{subsec-simplified} Let $A=\{0,1,\ldots,k-1\}$ be a $k$-letter alphabet and $A^n$ be the set of all words over $A$ of length $n$. A {\em de Bruijn graph} $B(n,k)$ consists of $k^n$ vertices labeled by words in $A^n$ and its directed edges are $x_1x_2\cdots x_n\rightarrow x_2\cdots x_nx_{n+1}$ where $x_i\in A$ for $i\in\{1,2,\ldots,n+1\}$. De Bruijn graphs are an important structure in combinatorics on words that found wide applications, in particular, in genome assembly~\cite{CPG}.

The {\em simplified de Bruijn graph} $S(n,k)$ is a simple graph obtained from $B(n,k)$ by removing orientations and loops and replacing multiple edges between a pair of vertices by a single edge. To our best knowledge, the notion of a simplified de Bruijn graph is introduced in this paper for the first time. We label the edge between $x_1x_2\cdots x_n$ and $x_2x_3\cdots x_{n+1}$ in $S(n,k)$ by $x_1x_2\cdots x_{n+1}$.

\subsection{Results in this paper.} In Section~\ref{binary} we show that binary simplified de Bruijn graphs are word-representable. In Section~\ref{non-binary}  we show that $S(2,k)$ and $S(3,k)$ are non-word-representable for $k\geq 3$. In Section~\ref{concluding} we state a conjecture on full classification of word-representable simplified de Bruijn graphs along with an open problem.

\section{Word-representability of $S(n,2)$}\label{binary}

Let $x^k=\underbrace{x\ldots x}_{k\mbox{ \tiny{times}}}$ and $\bar{0}=1$ and $\bar{1}=0$. 

\begin{theorem} $S(n,2)$ is word-representable for any $n\geq 1$. \end{theorem}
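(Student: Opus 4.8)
The plan is to use Theorem~\ref{key-thm} and exhibit a semi-transitive orientation of $S(n,2)$ rather than to construct a representing word directly. The first task is to understand the structure of $S(n,2)$ concretely. Its vertices are the $2^n$ binary strings of length $n$, and two strings $u,v$ are adjacent exactly when one can be obtained from the other by the shift-and-append operation that defines $B(n,2)$; that is, $u=x_1x_2\cdots x_n$ and $v=x_2\cdots x_n y$ for some bit $y$ (or the same relation with the roles reversed), after loops and multiplicities are discarded. So each vertex $x_1\cdots x_n$ is joined to its two \emph{successors} $x_2\cdots x_n 0$ and $x_2\cdots x_n 1$, and to its two \emph{predecessors} $0x_1\cdots x_{n-1}$ and $1x_1\cdots x_{n-1}$, with the degree dropping where some of these coincide with the vertex itself (the loop case, e.g.\ $0^n$ and $1^n$) or with each other. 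Getting this adjacency description exactly right, including which edges collapse in the binary case, is the necessary first step.

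Next I would propose a concrete orientation and prove it is semi-transitive. The natural candidate is to orient every edge according to some fixed total order on the vertices, so acyclicity is automatic; the delicate part is choosing the order so that the no-shortcut-violation condition of semi-transitivity holds. A promising order to try is the numerical value of the binary string, or equivalently to orient $u\to v$ when $v$ is the shift-append successor carrying the larger value; one should check whether this already works or whether a more refined order (for instance, ordering first by the suffix $x_2\cdots x_n$ and breaking ties by $x_1$, reflecting the layered shift structure) is needed so that directed paths respect the suffix layering. The goal is an orientation under which any directed path $v_0\to\cdots\to v_k$ with an arc between $v_0$ and $v_k$ forces all intermediate arcs $v_i\to v_j$ to be present.

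To verify semi-transitivity I would argue along directed paths. The key observation to exploit is that in $S(n,2)$ moving along an edge shifts the string by one position, so following a directed path of length $\ell$ from $x_1\cdots x_n$ produces a string whose first $n-\ell$ letters are a suffix of the original (for $\ell<n$), and once $\ell\ge n$ the original content has been entirely shifted out. This means long directed paths cannot close up into the short shift-append adjacency that an edge between $v_0$ and $v_k$ requires, unless $k$ is small; so the only potentially dangerous configurations are short paths, and these can be checked by a finite case analysis on the overlap of the strings. The main obstacle I anticipate is precisely this shortcut condition: ruling out a path $v_0\to v_1\to\cdots\to v_k$ that has an edge $v_0v_k$ but is missing some internal arc $v_i\to v_j$. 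I expect the binary restriction $k=2$ (two letters) to be exactly what makes this tractable, because at each shift step only two successors exist, tightly constraining how the bits along a path can evolve; the argument should either reduce to showing such configurations are impossible under the chosen order, or induct on $n$ using the layered structure, and it is here that the proof will require the most care.
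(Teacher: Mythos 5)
Your proposal is a plan rather than a proof, and the plan has both a missing commitment and a flawed central claim. You never settle on which total order to use, and you explicitly defer the only hard part (``one should check whether this already works or whether a more refined order \ldots{} is needed''); since most acyclic orientations of a graph are \emph{not} semi-transitive (indeed $W_5$ has none), picking ``some fixed total order'' and hoping is not an argument. Worse, the structural observation you intend to lean on is incorrect: you claim that a directed path of length $\ell$ shifts the string by $\ell$ positions, so that long paths cannot close up into an edge. But an edge of $S(n,2)$ joins $u$ and $v$ whenever \emph{either} is a forward shift of the other, and your orientation (by numerical value) is unrelated to the shift direction; along a directed path the shifts can alternate forward and backward, so after $\ell$ steps you may be back at a string sharing an $(n-1)$-letter overlap with $v_0$. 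The reduction to ``only short paths are dangerous'' therefore does not hold, and no finite case analysis is set up in its place.

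The paper avoids all of this with a much lighter tool: it exhibits an explicit proper $3$-colouring of $S(n,2)$ (colour each vertex by whether it ends in an even-length run of a repeated letter, an odd-length run of $0$'s, or an odd-length run of $1$'s) and invokes Theorem~\ref{3-col} that every $3$-colourable graph is word-representable. The parity of the trailing run is exactly the invariant that changes across every edge of $S(n,2)$, which is the idea your proposal is missing. If you want to salvage your orientation-based route, the canonical fix is to order the vertices by colour class, which is precisely how Theorem~\ref{3-col} produces a semi-transitive orientation (all directed paths then have length at most $2$, so no shortcut can arise) --- but at that point you have reproduced the paper's argument, and the $3$-colouring is the content you still need to supply.
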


\begin{proof} We will show that $S(n,2)$ is $3$-colorable so that the desired result will follow from Theorem~\ref{3-col}. Clearly, $S(1,2)$ is $3$-colorable, so that we can assume that $n\geq 2$. 

All vertices in $S(n,2)$ can be subdivided into the following three disjoint groups: vertices ending with $\bar{x}x^{2k}$ for $k\geq 1$ (where the $\bar{x}$ can be absent), those ending  with $10^{2k+1}$ for $k\geq 0$ (where the 1 can be absent), and those ending  with $01^{2k+1}$ for $k\geq 0$ (where the 0 can be absent). We colour the vertices in these groups Red, Blue and Green, respectively. 

Since $S(n,2)$ has no loops,  its edges correspond to the following two types of edges in $B(n,2)$:
\begin{itemize}
\item Edges $uv$ in $B(n,2)$,  where the words corresponding to $u$ and $v$ end in the same letter. Since one of $u$ or $v$ must end in an odd number of repeated letters and the other must end in an even number of repeated letters, these directed edges correspond to edges in $S(2,k)$ where the colours of $u$ and $v$ are always different.
  \item Edges $uv$ in $B(n,2)$,  where the words corresponding to $u$ and $v$ end in different letters. The vertex $v$ must end in one repeated letter $x$ (an odd number of letters), and $u$ must end in either an even or an odd number of repeated letters $\bar{x}$. These directed edges also correspond to edges in $S(2,k)$ where the colors of $u$ and $v$ are always different.
 \end{itemize}
 Thus, we have constructed a proper $3$-colouring of $S(2,k)$.
\end{proof}

\section{Non-word-representability of $S(2,k)$ and $S(3,k)$ for $k\geq 3$}\label{non-binary}

\begin{theorem} $S(2,k)$ is non-word-representable for any $k\geq 3$. \end{theorem}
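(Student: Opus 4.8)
The plan is to exhibit an induced copy of the wheel $W_5$ inside $S(2,k)$. Since the excerpt records that $W_5$ is the minimal (six-vertex) non-word-representable graph, and since word-representability passes to induced subgraphs — deleting from a representing word every letter outside a vertex subset $U$ yields a word representing the subgraph induced on $U$, because the alternation of any two retained letters is unaffected by such deletions — the presence of an induced $W_5$ forces $S(2,k)$ to be non-word-representable for every $k$ for which such a copy exists.

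First I would make the adjacency rule of $S(2,k)$ explicit. The vertices of $B(2,k)$ are ordered pairs $ab$ with $a,b\in\{0,1,\dots,k-1\}$, and its arcs are $ab\to bc$. Dropping orientations, loops and multiplicities, two distinct vertices $ab$ and $cd$ are adjacent in $S(2,k)$ if and only if one is reachable from the other by a single de Bruijn step, i.e.\ iff $b=c$ or $a=d$ (the trailing letter of one equals the leading letter of the other). This rule is \emph{local}: it depends only on the four letters involved, so the subgraph induced on any fixed vertex set is the same for every $k$ large enough to contain the letters it uses.

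Next I would single out the six vertices
\[
01,\quad 00,\ 10,\ 11,\ 12,\ 20,
\]
which involve only the letters $0,1,2$ and are therefore present whenever $k\ge 3$. Taking $01$ as the hub, a direct application of the rule ``$b=c$ or $a=d$'' shows that $01$ is adjacent to each of the other five vertices, and that the five rim vertices close up into the cycle
\[
00-10-11-12-20-00 .
\]
The point that must be verified with care is that the rim has \emph{no chords}, so that it is an induced $C_5$ rather than a cycle carrying extra edges. These checks are short case distinctions on which of $b=c$ or $a=d$ holds for each pair, and I omit them here; together they show that the six vertices induce exactly $W_5$.

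Because the induced subgraph on these six vertices is identical for all $k\ge 3$ — the adjacency rule is letter-local and no new letters intrude — this copy of $W_5$ persists for every $k\ge 3$, and the theorem follows from heredity together with the non-word-representability of $W_5$. The only genuine obstacle is the chord check on the rim: it is easy to find five rim vertices making up the hub's neighbourhood, but one must confirm that no additional adjacency among them turns the required $C_5$ into a graph with a chord, which would collapse the $W_5$ structure and break the argument. Everything else is routine bookkeeping.
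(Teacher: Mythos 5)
Your proposal is correct and takes essentially the same approach as the paper, which likewise exhibits an induced $W_5$ inside $S(2,3)$ (its Figure~1) and invokes heredity of non-word-representability. The chord checks you deferred do go through for your vertex set: with hub $01$ and rim cycle $00-10-11-12-20-00$, none of the five pairs $\{00,11\}$, $\{00,12\}$, $\{10,12\}$, $\{10,20\}$, $\{11,20\}$ satisfies ``trailing letter of one equals leading letter of the other'', so the rim is an induced $C_5$ and the six vertices induce exactly $W_5$.
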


\begin{proof} Since $S(2,k)$ is an induced subgraph of $S(2,k+1)$ for any $k\geq 1$, it is sufficient to prove that $S(2,3)$ is non-word-representable and to use the hereditary property of non-word-representability. We get non-word-representability of $S(2,3)$ by observing that it contains the non-word-representable graph $W_5$ as an induced subgraph given in Figure~\ref{W5}. 
\end{proof}

\begin{figure}
\begin{center}
\includegraphics[scale=0.4]{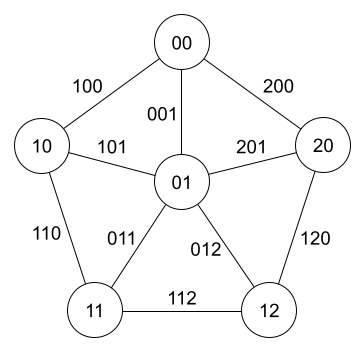}
\caption{An induced subgraph of $S(2,3)$ isomorphic to the wheel graph $W_5$. The node labels correspond to words in $A^2$ and edges are labeled according to the description in Section~\ref{subsec-simplified}.}\label{W5}
\end{center}
\end{figure}

\begin{theorem}\label{S3k-thm} $S(3,k)$ is non-word-representable for any $k\geq 3$.  \end{theorem}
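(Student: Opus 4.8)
The plan follows the same strategy that worked for $S(2,k)$: since $S(3,k)$ is an induced subgraph of $S(3,k+1)$ for every $k\geq 1$, and non-word-representability is hereditary under taking induced subgraphs, it suffices to treat the base case $k=3$. So the first step is to reduce the theorem to showing that $S(3,3)$ is non-word-representable. The second step is to exhibit, inside $S(3,3)$, an induced subgraph that is already known to be non-word-representable. The natural candidate is again the wheel $W_5$ (or some other small non-word-representable graph from the literature), mirroring the proof for $S(2,3)$; since $W_5$ is the unique minimal non-word-representable graph on six vertices, finding an induced copy of it is the cleanest route.

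To carry this out concretely, I would work directly with the vertex set $A^3$ where $A=\{0,1,2\}$, so $S(3,3)$ has $27$ vertices labeled by length-$3$ words. The adjacency rule, after removing loops and orientations, is that $x_1x_2x_3$ and $y_1y_2y_3$ are adjacent exactly when $x_2x_3=y_1y_2$ or $y_2y_3=x_1x_2$ (i.e.\ one word is obtainable from the other by the de Bruijn shift $x_1x_2x_3\to x_2x_3x_4$), with the two words distinct. The main task is then to search among the $27$ vertices for six that induce $W_5$: a $5$-cycle together with a vertex adjacent to all five. I would look for a hub vertex $h$ whose de Bruijn neighborhood (words sharing the appropriate overlap with $h$) is large enough to contain a $5$-cycle, and then verify that the five chosen neighbors form an induced $C_5$ with no chords. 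Because the overlap structure on $A^3$ is rich for $k=3$, such a configuration should exist, and in fact one can display it explicitly as a labeled figure exactly as was done for $S(2,3)$.

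The bulk of the work is verification rather than discovery: once a candidate six-vertex set is written down, one must check (a) that each of the fifteen pairs has the correct adjacency/non-adjacency to match $W_5$, using the overlap criterion above, and (b) that the subgraph is \emph{induced}, meaning the absent edges of $W_5$ genuinely correspond to non-adjacent pairs in $S(3,3)$, not merely edges we chose to ignore. Step (b) is the one that can go wrong: a na\"ively chosen $5$-cycle in the de Bruijn neighborhood of the hub may carry extra chords coming from the shift relation, collapsing $W_5$ into a graph that \emph{is} word-representable.

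I expect the main obstacle to be precisely this search-and-verify step: the overlap-based adjacency in $S(3,3)$ is more tangled than in $S(2,3)$, so ruling out unwanted chords among the five rim vertices requires care. The cleanest presentation, as in the $S(2,3)$ case, is to commit to an explicit six-vertex set, draw it as a labeled figure, and state that direct inspection against the adjacency rule confirms it is an induced $W_5$; the hereditary argument then completes the proof for all $k\geq 3$.
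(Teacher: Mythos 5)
Your reduction to the base case $k=3$ via the hereditary property of non-word-representability is exactly the paper's first step. The gap is in the second step: $S(3,3)$ does not contain $W_5$ as an induced subgraph --- in fact it does not contain $W_5$ as a subgraph at all --- so the search you propose cannot succeed. To see this, examine the neighbourhood of a vertex $v=x_1x_2x_3$, which consists of the out-neighbours $x_2x_3a$ and the in-neighbours $bx_1x_2$. By your own overlap criterion, two out-neighbours $x_2x_3a$ and $x_2x_3a'$ are adjacent only if $x_2=x_3$ (and then only the two pairs involving $x_2x_2x_2$ arise), two in-neighbours are adjacent only if $x_1=x_2$, and an out-neighbour is adjacent to an in-neighbour only in the single ``rotation'' case $x_2x_3x_1\sim x_3x_1x_2$, except when $x_1=x_2=x_3$, in which case $v$ has only four distinct neighbours. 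Running through the coincidence patterns of $x_1,x_2,x_3$, the graph induced on the neighbourhood of $v$ is in every case either a forest with at most three edges or, for $x_1=x_2=x_3$, a four-cycle on four vertices; it never contains a $5$-cycle, induced or otherwise. Hence no vertex of $S(3,3)$ can serve as the hub of a $W_5$, and the clean strategy that worked for $S(2,3)$ is simply unavailable here.

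This is precisely why the paper's proof is so much heavier than the $S(2,3)$ case: it exhibits a specific $17$-vertex induced subgraph $S$ of $S(3,3)$ (Figure~\ref{S33-subgraph}) and shows via Theorem~\ref{key-thm} that $S$ admits no semi-transitive orientation, by an exhaustive computer-generated case analysis of about one hundred branches (reproduced in the appendix), each branch forcing edge orientations through Lemmas~\ref{lemma} and~\ref{appendix2} until a shortcut appears. Your hedge ``or some other small non-word-representable graph from the literature'' would need to be made concrete to salvage the plan, and the paper's own open problem --- whether $S$ is a minimal non-word-representable induced subgraph of $S(3,3)$ --- indicates that no such small certificate is currently known. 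As written, your proposal identifies the right reduction but stalls exactly where all the real work lies.
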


\begin{proof} Since $S(3,k)$ is an induced subgraph of $S(3,k+1)$ for any $k\geq 1$, it is sufficient to prove that $S(3,3)$ is non-word-representable. Figure~\ref{S33-subgraph} presents an induced subgraph $S$ of $S(3,3)$, where we label edges according to the description in Section~\ref{subsec-simplified} and label each vertex using two labels, one being the label of the vertex coming from $B(3,3)$, and the other one is a technical label that we need to present our proof; for example, the vertices 102 and 210 are also labeled by 1 and 2, respectively.

\begin{figure}[ht]
\begin{center}
\includegraphics[scale=0.4]{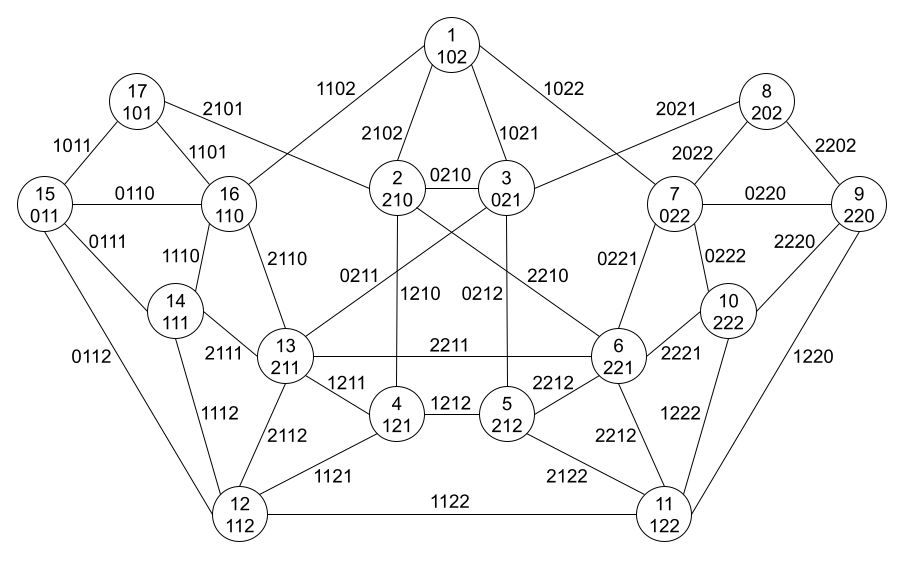}
\caption{A non-word-representable induced subgraph $S$ of $S(3,3)$. The lower node labels and edge labels are as described in Section~\ref{subsec-simplified}. The upper node labels correspond to those used in the computer-generated proof presented in the appendix.}\label{S33-subgraph}
\end{center}
\end{figure}

Non-word-representability of $S$ can be checked using two independent user-friendly pieces of software \cite{G,S}. Moreover, \cite{S} can produce a 100 ``line'' formal proof of non-word-representability of $S$ that, in principle, can be verified by a human, but it is too long to be included here, so it is presented in the appendix along with instructions on how to understand the proof (coming from \cite{KS}).  
\end{proof}

\section{Concluding remarks}\label{concluding}

In this paper, we show that binary simplified de Bruijn graphs are word-representable, while $S(2,k)$ and $S(3,k)$, for $k\geq 3$, are non-word-representable. We conclude our paper with the following conjecture and an open problem.

\begin{conjecture} $S(n,k)$ is non-word-rerpesentable for $n\geq 4$ and $k\geq 3$. \end{conjecture}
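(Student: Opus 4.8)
The plan is to fix $k=3$ from the outset. Since $S(n,3)$ is an induced subgraph of $S(n,k)$ for every $k\ge 3$ (simply restrict the alphabet $\{0,\dots,k-1\}$ to $\{0,1,2\}$, which induces exactly the subgraph on words over $\{0,1,2\}$), the hereditary property of non-word-representability already exploited in the proofs of the two preceding theorems reduces the conjecture to the single claim that $S(n,3)$ is non-word-representable for all $n\ge 4$. By Theorem~\ref{key-thm} it then suffices, for each such $n$, to exhibit an induced subgraph of $S(n,3)$ that admits no semi-transitive orientation, and the natural targets are the minimal non-word-representable graph $W_5$ (Figure~\ref{W5}) or a copy of the explicit witness $S$ from the proof of Theorem~\ref{S3k-thm} (Figure~\ref{S33-subgraph}). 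The concrete adjacency rule to work with is: distinct words $u=x_1\cdots x_n$ and $v=y_1\cdots y_n$ are adjacent in $S(n,3)$ if and only if $x_2\cdots x_n = y_1\cdots y_{n-1}$ or $y_2\cdots y_n = x_1\cdots x_{n-1}$, i.e.\ one is a single left-shift of the other.

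The first route I would pursue is a \emph{uniform family}: build, for every $n\ge 4$, six explicit length-$n$ words over $\{0,1,2\}$ that induce a $W_5$ (or $S$). Because an overlap of length $n-1$ forces two words to agree in all but one position, two vertices of the family are adjacent essentially iff one is the prescribed shift of the other; hence the family must be designed as an explicit shift-pattern (a short directed walk in $B(n,3)$ whose vertices, when orientation and multiplicities are forgotten, close up into the desired wheel). The aim is to parametrize these words by $n$ using a fixed short ``core'' embedded in a long, carefully chosen context, so that the combinatorial type of the induced subgraph is independent of $n$. The delicate point here is verifying the \emph{absence of chords}: one must check that no two chosen words accidentally differ by a single left-shift beyond the intended edges, for every $n$ at once. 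This is precisely the step that was discharged by computer for $n=3$, and making it a clean hand-argument uniform in $n$ is the technical heart of this approach.

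The second, complementary route is \emph{induction on $n$}, leveraging the classical identity $B(n+1,k)=L(B(n,k))$ (the line digraph): vertices of $S(n+1,3)$ correspond to the arcs of $B(n,3)$, and the shift-adjacency in order $n+1$ is governed by head-to-tail incidence of arcs in order $n$. The idea would be to lift a non-word-representable induced subgraph of $S(n,3)$ to one of $S(n+1,3)$ by replacing each witness vertex with a suitable incident arc and tracking how the wheel/witness structure is carried across the line-digraph operation. If such a lift can be shown to preserve both adjacency and non-adjacency, an induction anchored at the $n=3$ case (Theorem~\ref{S3k-thm}) would settle all $n\ge 4$ in one stroke.

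I expect the main obstacle to be common to both routes and to stem from the \emph{simplification} itself: unlike the reduction in $k$, there is no monotone induced-subgraph relation in $n$, and the operations of deleting loops, merging multiple edges, and forgetting orientation break the clean line-digraph correspondence between $S(n,3)$ and $S(n+1,3)$. Consequently, the crux is controlling \emph{spurious} edges: as $n$ grows the relevant overlaps lengthen, and one must certify that the long-range structure of the ambient graph never introduces an unwanted adjacency (a chord) that destroys the witness, nor collapses two intended vertices through loop/multi-edge identification. Securing this uniformly in $n$—either by an incompressibility-style choice of context in the uniform-family approach, or by a careful bookkeeping of incidences in the inductive approach—is where I anticipate the real work to lie.
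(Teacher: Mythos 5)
The statement you are addressing is stated in the paper only as a \emph{conjecture}: the paper contains no proof of it, so there is nothing to compare your argument against except the two special cases the paper does settle. Your opening reduction is correct and is exactly the move the paper makes for $S(2,k)$ and $S(3,k)$: restricting the alphabet to $\{0,1,2\}$ realizes $S(n,3)$ as an induced subgraph of $S(n,k)$, so by heredity of non-word-representability it suffices to treat $k=3$. But from that point on your text is a research plan, not a proof: both of your routes end at an explicitly unverified step, and that step is the entire content of the conjecture.

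Concretely, in the uniform-family route you never produce the family. This is not a bookkeeping omission: in $S(n,3)$ a vertex $x_1\cdots x_n$ has at most six neighbours (three shifts in each direction), and for two out-neighbours $x_2\cdots x_n y$ and $x_2\cdots x_n y'$ of a common vertex to be adjacent one must be a shift of the other, which imposes strong periodicity constraints on $x_2\cdots x_n$. So it is not even clear that a $W_5$ (hub of degree $5$ with a $5$-cycle inside its neighbourhood) embeds as an induced subgraph for large $n$, and the $17$-vertex witness $S$ of Theorem~\ref{S3k-thm} was certified only for $n=3$ and only by machine. In the inductive route, the identity $B(n+1,k)=L(B(n,k))$ lives at the level of the directed multigraphs; after deleting loops, merging parallel edges and forgetting orientation there is no induced-subgraph (or otherwise hereditary) relation between $S(n,3)$ and $S(n+1,3)$, and non-word-representability is not known to be preserved by any line-graph-type operation, so the induction has no mechanism to carry the witness upward. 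You have correctly identified where the difficulty lies, but neither route is carried out, so the conjecture remains exactly as open as the paper leaves it.
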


\begin{problem} Is it true that the graph $S$ in Figure~\ref{S33-subgraph} is a minimal (by the number of vertices) non-word-representable induced subgraph of the graph $S(3,3)$? Finding a smaller non-word-representable induced subgraph  might allow, using the software \cite{S}, shortening the formal proof of non-word-representability of  $S(3,3)$ in the proof of Theorem~\ref{S3k-thm}.\end{problem}

\appendix
\section*{Appendix: Proof of non-word-representability of the graph $S$ in Theorem~\ref{S3k-thm}}\label{appendix}

Our proof uses the following lemmas.

\begin{lemma}[\cite{KP}]\label{lemma} Suppose that an undirected graph $G$ has a cycle $C=x_1x_2\cdots x_mx_1$, where $m\geq 4$ and the vertices in $\{x_1,x_2,\ldots,x_m\}$ do not induce a clique in $G$.  If $G$ is oriented semi-transitively, and $m-2$ edges of $C$ are oriented in the same direction (i.e. from $x_i$ to $x_{i+1}$ or vice versa, where the index $m+1:=1$) then the remaining two edges of $C$ are oriented in the opposite direction.\end{lemma}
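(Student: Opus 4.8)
The plan is to argue by contradiction, exploiting both defining properties of a semi-transitive orientation: acyclicity and the absence of ``shortcuts.'' Without loss of generality, I would assume the $m-2$ edges singled out in the hypothesis are oriented \emph{forward}, i.e.\ as $x_i\to x_{i+1}$ (indices taken cyclically with $x_{m+1}:=x_1$). It then suffices to prove that neither of the two remaining edges can also be oriented forward, since in that case both must point the opposite way, which is exactly the claim. So suppose, for contradiction, that at least one of them is oriented forward; then at least $m-1$ of the $m$ edges of $C$ are forward, and I would split into two cases according to whether the final edge is forward or backward.

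First, if all $m$ edges are forward, then $x_1\to x_2\to\cdots\to x_m\to x_1$ is a directed cycle, contradicting acyclicity. Hence exactly one edge is backward while the other $m-1$ are forward. After a cyclic relabeling of the vertices, which preserves the forward convention, I may assume the unique backward edge is the one joining $x_m$ and $x_1$; being backward, it is oriented $x_1\to x_m$. The remaining $m-1$ forward edges then form the directed path $P\colon x_1\to x_2\to\cdots\to x_m$, and the backward edge supplies an arc between the two endpoints $x_1$ and $x_m$ of $P$.

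The crux is to apply semi-transitivity to $P$. Since there \emph{is} an arc between $x_1$ and $x_m$ (and acyclicity pins its direction to $x_1\to x_m$, consistent with $P$, rather than to the reverse which would create a cycle with $P$), the defining condition cannot be satisfied by ``no arc'' and must instead force $x_i\to x_j$ to be an arc for all $1\le i<j\le m$. In particular every pair of vertices among $x_1,\ldots,x_m$ is then adjacent, so $\{x_1,\ldots,x_m\}$ induces a clique, contradicting the hypothesis that these vertices do not form a clique. This contradiction rules out any remaining edge being forward and completes the argument. The only delicate points I anticipate are the bookkeeping of the cyclic relabeling and the observation that acyclicity fixes the direction of the ``shortcut'' arc, so that semi-transitivity applies in its clique-forcing form rather than vacuously; the hypothesis $m\geq 4$ guarantees the configuration is a genuine cycle with a nontrivial chord, and the non-clique assumption is precisely what is violated.
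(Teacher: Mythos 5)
Your proof is correct. Note that the paper does not actually prove Lemma~\ref{lemma}: it is stated with a citation to \cite{KP} and used as a black box, so there is no in-paper argument to compare against. Your reasoning is the standard (essentially the only) one: acyclicity kills the all-forward case, and in the $(m-1)$-forward case the directed Hamiltonian path $x_1\to\cdots\to x_m$ of $C$ together with the arc $x_1\to x_m$ forces, by the definition of semi-transitivity, all arcs $x_i\to x_j$ for $i<j$, contradicting the assumption that $\{x_1,\dots,x_m\}$ is not a clique. The only cosmetic remark is that your parenthetical appeal to acyclicity to ``pin'' the direction of the chordal arc is redundant, since that direction is already determined by the edge being the backward one; this does not affect the validity of the argument.
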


\begin{lemma}[\cite{KS}]\label{appendix2}
If $G$ is word-representable and $u$ is an arbitrary vertex in $G$, then there exists a semi-transitive orientation of $G$ with source~$u$.
\end{lemma}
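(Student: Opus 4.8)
The plan is to reduce the statement to a purely structural claim about semi-transitive orientations and then establish it by a reversal argument. Since $G$ is word-representable, Theorem~\ref{key-thm} gives a semi-transitive orientation $O$ of $G$, so it suffices to transform $O$, while preserving semi-transitivity, into an orientation in which $u$ is a source (a vertex with no incoming arcs).

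The engine of the proof is the following claim, which I expect to be the main obstacle: \emph{if $s$ is a source of a semi-transitive orientation, then reversing all arcs incident to $s$ (so that $s$ becomes a sink) yields another semi-transitive orientation.} Acyclicity is immediate: a directed cycle in the new orientation $O'$ that avoids $s$ already exists in $O$, while one through $s$ is impossible since $s$ is now a sink. For semi-transitivity I would argue by contradiction: suppose $O'$ contains a shortcut, that is, a directed path $x_0\to x_1\to\cdots\to x_m$ together with an arc $x_0\to x_m$ and a non-adjacent pair among the $x_i$. If $s$ does not occur, the configuration already lives in $O$ and contradicts its semi-transitivity. Otherwise, because $s$ is a sink in $O'$ it can occupy only the terminal position $x_m$; reversing the arcs at $s$ then turns the path and its chord into the directed path $s\to x_0\to\cdots\to x_{m-1}$ with chord $s\to x_{m-1}$ in $O$. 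Since reversal changes directions but not adjacencies, the same non-adjacent pair persists, so this is a shortcut in $O$---a contradiction. By the top--bottom symmetry of the definition of semi-transitivity, the dual operation, turning a sink into a source, is likewise semi-transitivity-preserving.

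With the claim in hand, I would run the following iteration. Call a vertex an \emph{ancestor} of $u$ if it can reach $u$ along a directed path, and take as potential the number of proper ancestors of $u$. If $u$ is not already a source it has an in-neighbour, so following arcs backwards reaches a source $s\neq u$ that is an ancestor of $u$. Reversing $s$ into a sink keeps the orientation semi-transitive by the claim, and I would check that it strictly lowers the potential: $s$ itself stops being an ancestor, while no new ancestors are created, since every new arc now points into the sink $s$ and hence cannot lie on a directed path to $u$. Iterating, the potential drops to zero, at which point $u$ has no ancestors and is a source. Throughout, only arcs incident to the chosen sources are altered and $u$ is never turned into a sink, so the final orientation is a semi-transitive orientation of $G$ with source $u$, as required.

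I expect the delicate point to be the case analysis in the reversal claim---in particular, verifying that a hypothetical shortcut in $O'$ genuinely involving $s$ always maps back to a shortcut in $O$, including when the offending non-adjacent pair uses $s$ itself. The observation that makes this work is simply that a sink can occupy only the last position of a directed path, so reversal cleanly relocates it to the first position of a path in $O$ without disturbing the adjacencies that witness the shortcut.
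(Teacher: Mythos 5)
The paper gives no proof of this lemma at all---it is imported from \cite{KS}, where the standard argument runs through words rather than orientations: one takes a \emph{uniform} word $w$ representing $G$, uses the fact that cyclic shifts of a uniform representant represent the same graph to arrange that $w$ begins with $u$, and then notes that the semi-transitive orientation extracted from $w$ in the proof of Theorem~\ref{key-thm} (edges oriented by order of first occurrence) makes the initial letter a source. Your proof is correct but takes a genuinely different, purely structural route: after a single appeal to Theorem~\ref{key-thm} you never touch words again, instead proving that reversing all arcs at a source of a semi-transitive orientation preserves semi-transitivity and then driving $u$ to a source by a potential argument on its ancestor set. Your key claim checks out: by acyclicity a violating configuration is a directed path $v_0\rightarrow\cdots\rightarrow v_k$ with the arc $v_0\rightarrow v_k$ and a non-adjacent pair among its vertices; in the flipped orientation the new sink $s$ can only occupy the terminal position $v_k$ (every other vertex of the configuration has an out-arc), and undoing the reversal rotates this to the path $s\rightarrow v_0\rightarrow\cdots\rightarrow v_{k-1}$ with chord $s\rightarrow v_{k-1}$ on the same vertex set, so the same non-adjacent pair witnesses a shortcut in the original orientation. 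The descent is also sound: the backward walk from an in-neighbour of $u$ must end at a source $s\neq u$ by acyclicity, and since every new arc points into the sink $s$, no vertex gains a directed path to $u$ while $s$ loses its status as an ancestor, so the count of proper ancestors strictly drops. The word-based proof buys brevity given the standard toolkit of uniform representants; yours buys a self-contained fact about semi-transitive orientations (source reversal preserves semi-transitivity) that is of independent interest and bypasses that toolkit entirely.
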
 

We refer to \cite{KS} for more details on the format of the proof below. By a ``line'' of a proof we mean a sequence of instructions that directs us in orienting a partially oriented graph and necessarily ends with detecting a shortcut or another contradiction showing that this particular orientation branch will not produce a semi-transitive orientation. If no branch produces a semi-transitive orientation then the graph is non-semi-transitively orientable (and hence non-word-representable by Theorem~\ref{key-thm}). 

The proof begins with $15\rightarrow 17$ showing the orientation of the edge $(15)(17)$, which is done W.L.O.G.\ because reversing all orientations in a semi-transitively oriented graph produces a semi-transitively oriented graph. Further, there are four types of instructions:
\begin{itemize}
\item ``MC'' followed by a number $X$ means ``Move to Copy $X$'', where Copy $X$ of the graph in question is a partially oriented version of the graph that has been created at some point in the branching process. This instruction is always followed by an oriented edge $A\rightarrow B$ reminding on the directed edge obtained after application of the branching process; see description of ``B'' to be discussed next. 
\item ``B'' followed by ``$X\rightarrow Y$ (Copy $Z$)'' means ``Branch on edge $XY$, orient the edge as $X\rightarrow Y$, create a copy of the current version of the graph except orient the edge $XY$ there as $Y\rightarrow X$, and call the new copy $Z$; leave Copy $Z$ aside and continue to follow the instructions''. The instruction B occurs when the software detects that no application of Lemma~\ref{lemma} is possible in the partially oriented graph.
\item One ``O'' followed by ``$X\rightarrow Y$'', in turn followed by, in brackets, ``C'' followed by a cycle ``$X$-$Y$-$Z$''. This instruction tells us to orient the edge $XY$ as $X\rightarrow Y$ because otherwise, in the triangle $XYZ$, we would get a directed cycle. If instead of a triangle we see a longer cycle, then we deal with an application of Lemma~\ref{lemma} to a cycle where all but two edges are oriented in one direction, and one of the remaining two edges is oriented in the opposite direction.
\item Two ``O''s followed by ``$X\rightarrow Y$'', in turn followed by, in brackets, ``C'' followed by a cycle ``$X$-$Y$-$Z$-$\cdots$''. This instruction tells us to which cycle Lemma~\ref{lemma} can be applied and which edges will become oriented.
\end{itemize}

\noindent
Each line ends with ``$S:X-Y-\cdots-Z$'' indicating that a shortcut with the shortcutting edge $X\rightarrow Z$ is obtained. \\

\noindent
{\bf The proof:} 

\noindent
By Lemma~\ref{appendix2}, we can assume that the vertex 13 (of the maximum degree) is a source. The rest of the proof goes as follows. \\

\begin{tiny}
\noindent
\noindent
{\bf 1.} B14$\rightarrow$16 (Copy 2) O14$\rightarrow$12 (C12-14-16-13) O4$\rightarrow$12 (C4-13-14-12) B5$\rightarrow$6 (Copy 3) O5$\rightarrow$3 (C3-13-6-5) O5$\rightarrow$4 (C3-13-4-5) O11$\rightarrow$12 O5$\rightarrow$11 (C4-12-11-5) O11$\rightarrow$6 (C6-13-12-11) B2$\rightarrow$6 (Copy 4) O2$\rightarrow$3 (C2-6-5-3) O2$\rightarrow$4 (C2-4-5-3) B6$\rightarrow$10 (Copy 5) O11$\rightarrow$10 (C6-11-10) O11$\rightarrow$9 O9$\rightarrow$10 (C6-11-9-10) B8$\rightarrow$9 (Copy 6) O7$\rightarrow$10 O8$\rightarrow$7 (C7-10-9-8) O7$\rightarrow$6 (C6-11-10-7) O7$\rightarrow$9 (C6-10-9-7) O8$\rightarrow$3 (C2-6-7-8-3) B12$\rightarrow$15 (Copy 7) O14$\rightarrow$15 (C12-15-14) O16$\rightarrow$15 (C12-15-16-13) O2$\rightarrow$17 O17$\rightarrow$15 (C2-17-15-12-4) O17$\rightarrow$16 (C14-16-17-15) O1$\rightarrow$16 O2$\rightarrow$1 (C1-16-17-2) O7$\rightarrow$1 (C1-7-6-2) O3$\rightarrow$1 (C1-7-8-3) S:13-3-1-16

\noindent
{\bf 2.} MC7 15$\rightarrow$12 O15$\rightarrow$14 (C12-15-14-13) O15$\rightarrow$16 (C14-16-15) O17$\rightarrow$16 O15$\rightarrow$17 (C14-16-17-15) O2$\rightarrow$17 (C2-17-15-12-4) O1$\rightarrow$16 O2$\rightarrow$1 (C1-16-17-2) O7$\rightarrow$1 (C1-7-6-2) O3$\rightarrow$1 (C1-7-8-3) S:13-3-1-16

\noindent
{\bf 3.} MC6 9$\rightarrow$8 O3$\rightarrow$8 (C3-8-9-11-5) B15$\rightarrow$16 (Copy 8) O15$\rightarrow$12 (C12-15-16-13) O15$\rightarrow$14 (C12-15-14-13) O17$\rightarrow$16 O15$\rightarrow$17 (C14-16-17-15) O2$\rightarrow$17 (C2-17-15-12-4) O1$\rightarrow$16 O2$\rightarrow$1 (C1-16-17-2) O1$\rightarrow$3 (C1-16-13-3) O1$\rightarrow$7 O7$\rightarrow$8 (C1-7-8-3) O6$\rightarrow$7 (C1-7-6-2) O9$\rightarrow$7 (C6-10-9-7) O10$\rightarrow$7 (C6-11-10-7) S:9-10-7-8

\noindent
{\bf 4.} MC8 16$\rightarrow$15 O14$\rightarrow$15 (C14-16-15) O12$\rightarrow$15 (C12-15-14-13) O2$\rightarrow$17 O17$\rightarrow$15 (C2-17-15-12-4) O17$\rightarrow$16 (C14-16-17-15) O1$\rightarrow$16 O2$\rightarrow$1 (C1-16-17-2) O1$\rightarrow$3 (C1-16-13-3) O1$\rightarrow$7 O7$\rightarrow$8 (C1-7-8-3) O6$\rightarrow$7 (C1-7-6-2) O9$\rightarrow$7 (C6-10-9-7) O10$\rightarrow$7 (C6-11-10-7) S:9-10-7-8

\noindent
{\bf 5.} MC5 10$\rightarrow$6 O10$\rightarrow$11 (C5-11-10-6) O10$\rightarrow$7 O7$\rightarrow$6 (C6-11-10-7) B7$\rightarrow$8 (Copy 9) O10$\rightarrow$9 O9$\rightarrow$8 (C7-10-9-8) O7$\rightarrow$9 (C6-10-9-7) O11$\rightarrow$9 (C6-11-9-7) O3$\rightarrow$8 (C3-8-9-11-5) B12$\rightarrow$15 (Copy 10) O14$\rightarrow$15 (C12-15-14) O16$\rightarrow$15 (C12-15-16-13) O2$\rightarrow$17 O17$\rightarrow$15 (C2-17-15-12-4) O17$\rightarrow$16 (C14-16-17-15) O1$\rightarrow$16 O2$\rightarrow$1 (C1-16-17-2) O7$\rightarrow$1 (C1-7-6-2) O3$\rightarrow$1 (C1-7-8-3) S:13-3-1-16

\noindent
{\bf 6.} MC10 15$\rightarrow$12 O15$\rightarrow$14 (C12-15-14-13) O15$\rightarrow$16 (C14-16-15) O17$\rightarrow$16 O15$\rightarrow$17 (C14-16-17-15) O2$\rightarrow$17 (C2-17-15-12-4) O1$\rightarrow$16 O2$\rightarrow$1 (C1-16-17-2) O7$\rightarrow$1 (C1-7-6-2) O3$\rightarrow$1 (C1-7-8-3) S:13-3-1-16

\noindent
{\bf 7.} MC9 8$\rightarrow$7 O8$\rightarrow$3 (C2-6-7-8-3) B15$\rightarrow$16 (Copy 11) O15$\rightarrow$12 (C12-15-16-13) O15$\rightarrow$14 (C12-15-14-13) O17$\rightarrow$16 O15$\rightarrow$17 (C14-16-17-15) O2$\rightarrow$17 (C2-17-15-12-4) O1$\rightarrow$16 O2$\rightarrow$1 (C1-16-17-2) O7$\rightarrow$1 (C1-7-6-2) O3$\rightarrow$1 (C1-7-8-3) S:13-3-1-16

\noindent
{\bf 8.} MC11 16$\rightarrow$15 O14$\rightarrow$15 (C14-16-15) O12$\rightarrow$15 (C12-15-14-13) O2$\rightarrow$17 O17$\rightarrow$15 (C2-17-15-12-4) O17$\rightarrow$16 (C14-16-17-15) O1$\rightarrow$16 O2$\rightarrow$1 (C1-16-17-2) O7$\rightarrow$1 (C1-7-6-2) O3$\rightarrow$1 (C1-7-8-3) S:13-3-1-16

\noindent
{\bf 9.} MC4 6$\rightarrow$2 O3$\rightarrow$2 (C2-6-5-3) O4$\rightarrow$2 (C2-4-5-3) B6$\rightarrow$10 (Copy 12) O11$\rightarrow$10 (C6-11-10) O11$\rightarrow$9 O9$\rightarrow$10 (C6-11-9-10) B8$\rightarrow$9 (Copy 13) O7$\rightarrow$10 O8$\rightarrow$7 (C7-10-9-8) O7$\rightarrow$6 (C6-11-10-7) O7$\rightarrow$1 O1$\rightarrow$2 (C1-7-6-2) O8$\rightarrow$3 O3$\rightarrow$1 (C1-7-8-3) O16$\rightarrow$1 (C1-16-13-3) O16$\rightarrow$17 O17$\rightarrow$2 (C1-16-17-2) O7$\rightarrow$9 (C6-10-9-7) O15$\rightarrow$17 O14$\rightarrow$15 (C14-16-17-15) O12$\rightarrow$15 (C12-15-14-13) S:4-12-15-17-2

\noindent
{\bf 10.} MC13 9$\rightarrow$8 O3$\rightarrow$8 (C3-8-9-11-5) B15$\rightarrow$16 (Copy 14) O15$\rightarrow$12 (C12-15-16-13) O15$\rightarrow$14 (C12-15-14-13) O17$\rightarrow$16 O15$\rightarrow$17 (C14-16-17-15) O17$\rightarrow$2 (C2-17-16-13-3) B7$\rightarrow$10 (Copy 15) O7$\rightarrow$6 (C6-11-10-7) O7$\rightarrow$1 O1$\rightarrow$2 (C1-7-6-2) O1$\rightarrow$16 (C1-16-17-2) O7$\rightarrow$8 O3$\rightarrow$1 (C1-7-8-3) S:13-3-1-16

\noindent
{\bf 11.} MC15 10$\rightarrow$7 O6$\rightarrow$7 (C6-10-7) O9$\rightarrow$7 (C7-10-9) O8$\rightarrow$7 (C7-10-9-8) O1$\rightarrow$7 O3$\rightarrow$1 (C1-7-8-3) O1$\rightarrow$2 (C1-7-6-2) O1$\rightarrow$16 (C1-16-17-2) S:13-3-1-16

\noindent
{\bf 12.} MC14 16$\rightarrow$15 O14$\rightarrow$15 (C14-16-15) O12$\rightarrow$15 (C12-15-14-13) B2$\rightarrow$17 (Copy 16) O16$\rightarrow$17 (C2-17-16-13-3) O15$\rightarrow$17 (C14-16-17-15) B7$\rightarrow$10 (Copy 17) O7$\rightarrow$6 (C6-11-10-7) O7$\rightarrow$1 O1$\rightarrow$2 (C1-7-6-2) O1$\rightarrow$16 (C1-16-17-2) O7$\rightarrow$8 O3$\rightarrow$1 (C1-7-8-3) S:13-3-1-16

\noindent
{\bf 13.} MC17 10$\rightarrow$7 O6$\rightarrow$7 (C6-10-7) O9$\rightarrow$7 (C7-10-9) O8$\rightarrow$7 (C7-10-9-8) O1$\rightarrow$7 O3$\rightarrow$1 (C1-7-8-3) O1$\rightarrow$2 (C1-7-6-2) O1$\rightarrow$16 (C1-16-17-2) S:13-3-1-16

\noindent
{\bf 14.} MC16 17$\rightarrow$2 O17$\rightarrow$15 (C2-17-15-12-4) O17$\rightarrow$16 (C14-16-17-15) B7$\rightarrow$10 (Copy 18) O7$\rightarrow$6 (C6-11-10-7) O7$\rightarrow$1 O1$\rightarrow$2 (C1-7-6-2) O1$\rightarrow$16 (C1-16-17-2) O7$\rightarrow$8 O3$\rightarrow$1 (C1-7-8-3) S:13-3-1-16

\noindent
{\bf 15.} MC18 10$\rightarrow$7 O6$\rightarrow$7 (C6-10-7) O9$\rightarrow$7 (C7-10-9) O8$\rightarrow$7 (C7-10-9-8) O1$\rightarrow$7 O3$\rightarrow$1 (C1-7-8-3) O1$\rightarrow$2 (C1-7-6-2) O1$\rightarrow$16 (C1-16-17-2) S:13-3-1-16

\noindent
{\bf 16.} MC12 10$\rightarrow$6 O10$\rightarrow$11 (C5-11-10-6) O10$\rightarrow$7 O7$\rightarrow$6 (C6-11-10-7) O7$\rightarrow$1 O1$\rightarrow$2 (C1-7-6-2) B1$\rightarrow$16 (Copy 19) O1$\rightarrow$3 (C1-16-13-3) O7$\rightarrow$8 O8$\rightarrow$3 (C1-7-8-3) O10$\rightarrow$9 O9$\rightarrow$8 (C7-10-9-8) O7$\rightarrow$9 (C6-10-9-7) O11$\rightarrow$9 (C6-11-9-7) S:5-11-9-8-3

\noindent
{\bf 17.} MC19 16$\rightarrow$1 O16$\rightarrow$17 O17$\rightarrow$2 (C1-16-17-2) O3$\rightarrow$1 (C1-16-13-3) O15$\rightarrow$17 O14$\rightarrow$15 (C14-16-17-15) O12$\rightarrow$15 (C12-15-14-13) S:4-12-15-17-2

\noindent
{\bf 18.} MC3 6$\rightarrow$5 O3$\rightarrow$5 (C3-13-6-5) O4$\rightarrow$5 (C3-13-4-5) B2$\rightarrow$6 (Copy 20) O2$\rightarrow$3 (C2-6-5-3) O2$\rightarrow$4 (C2-4-5-3) B2$\rightarrow$17 (Copy 21) B16$\rightarrow$17 (Copy 22) O15$\rightarrow$17 O14$\rightarrow$15 (C14-16-17-15) O12$\rightarrow$15 (C12-15-14-13) S:2-4-12-15-17

\noindent
{\bf 19.} MC22 17$\rightarrow$16 O1$\rightarrow$16 O2$\rightarrow$1 (C1-16-17-2) O1$\rightarrow$3 (C1-16-13-3) B6$\rightarrow$11 (Copy 23) O12$\rightarrow$11 (C6-13-12-11) O5$\rightarrow$11 (C4-12-11-5) O10$\rightarrow$11 O6$\rightarrow$10 (C5-11-10-6) B9$\rightarrow$10 (Copy 24) O9$\rightarrow$11 (C9-11-10) B1$\rightarrow$7 (Copy 25) O6$\rightarrow$7 (C1-7-6-2) O9$\rightarrow$7 (C6-10-9-7) O10$\rightarrow$7 (C6-11-10-7) O9$\rightarrow$8 O8$\rightarrow$7 (C7-10-9-8) O8$\rightarrow$3 (C1-7-8-3) S:9-8-3-5-11

\noindent
{\bf 20.} MC25 7$\rightarrow$1 O7$\rightarrow$6 (C1-7-6-2) O7$\rightarrow$10 (C6-10-7) O7$\rightarrow$8 O8$\rightarrow$3 (C1-7-8-3) O7$\rightarrow$9 (C6-10-9-7) O9$\rightarrow$8 (C7-10-9-8) S:9-8-3-5-11

\noindent
{\bf 21.} MC24 10$\rightarrow$9 O7$\rightarrow$9 O6$\rightarrow$7 (C6-10-9-7) O1$\rightarrow$7 (C1-7-6-2) O11$\rightarrow$9 (C6-11-9-7) O10$\rightarrow$7 (C6-11-10-7) O3$\rightarrow$8 O8$\rightarrow$9 (C3-8-9-11-5) O7$\rightarrow$8 (C1-7-8-3) S:10-7-8-9

\noindent
{\bf 22.} MC23 11$\rightarrow$6 O11$\rightarrow$5 (C5-11-6) O11$\rightarrow$12 (C4-12-11-5) B1$\rightarrow$7 (Copy 26) O6$\rightarrow$7 (C1-7-6-2) O11$\rightarrow$9 O9$\rightarrow$7 (C6-11-9-7) O11$\rightarrow$10 O10$\rightarrow$7 (C6-11-10-7) O6$\rightarrow$10 (C5-11-10-6) O9$\rightarrow$10 (C6-10-9-7) O9$\rightarrow$8 O8$\rightarrow$7 (C7-10-9-8) O8$\rightarrow$3 (C1-7-8-3) S:11-9-8-3-5

\noindent
{\bf 23.} MC26 7$\rightarrow$1 O7$\rightarrow$6 (C1-7-6-2) O7$\rightarrow$8 O8$\rightarrow$3 (C1-7-8-3) B7$\rightarrow$10 (Copy 27) O11$\rightarrow$10 (C6-11-10-7) O6$\rightarrow$10 (C5-11-10-6) O9$\rightarrow$10 O7$\rightarrow$9 (C6-10-9-7) O11$\rightarrow$9 (C6-11-9-7) O9$\rightarrow$8 (C7-10-9-8) S:11-9-8-3-5

\noindent
{\bf 24.} MC27 10$\rightarrow$7 O10$\rightarrow$6 (C6-10-7) O10$\rightarrow$11 (C5-11-10-6) O10$\rightarrow$9 O9$\rightarrow$8 (C7-10-9-8) O7$\rightarrow$9 (C6-10-9-7) O11$\rightarrow$9 (C6-11-9-7) S:11-9-8-3-5

\noindent
{\bf 25.} MC21 17$\rightarrow$2 O17$\rightarrow$16 (C2-17-16-13-3) O17$\rightarrow$15 O15$\rightarrow$12 (C2-17-15-12-4) O15$\rightarrow$14 (C12-15-14-13) S:17-15-14-16

\noindent
{\bf 26.} MC20 6$\rightarrow$2 O3$\rightarrow$2 (C2-6-5-3) O4$\rightarrow$2 (C2-4-5-3) B2$\rightarrow$17 (Copy 28) O16$\rightarrow$17 (C2-17-16-13-3) O15$\rightarrow$17 O14$\rightarrow$15 (C14-16-17-15) O12$\rightarrow$15 (C12-15-14-13) O16$\rightarrow$15 (C12-15-16-13) B6$\rightarrow$11 (Copy 29) O12$\rightarrow$11 (C6-13-12-11) O5$\rightarrow$11 (C4-12-11-5) O10$\rightarrow$11 O6$\rightarrow$10 (C5-11-10-6) B6$\rightarrow$7 (Copy 30) O10$\rightarrow$7 (C6-11-10-7) B7$\rightarrow$8 (Copy 31) O10$\rightarrow$9 O9$\rightarrow$8 (C7-10-9-8) O7$\rightarrow$9 (C6-10-9-7) O11$\rightarrow$9 (C6-11-9-7) O3$\rightarrow$8 (C2-6-7-8-3) S:3-5-11-9-8

\noindent
{\bf 27.} MC31 8$\rightarrow$7 B3$\rightarrow$8 (Copy 32) O1$\rightarrow$7 O3$\rightarrow$1 (C1-7-8-3) O1$\rightarrow$2 (C1-7-6-2) O1$\rightarrow$16 (C1-16-17-2) S:13-3-1-16

\noindent
{\bf 28.} MC32 8$\rightarrow$3 O8$\rightarrow$9 O9$\rightarrow$11 (C3-8-9-11-5) O9$\rightarrow$7 (C6-11-9-7) O9$\rightarrow$10 (C6-10-9-7) S:8-9-10-7

\noindent
{\bf 29.} MC30 7$\rightarrow$6 O7$\rightarrow$10 (C6-10-7) O7$\rightarrow$1 O1$\rightarrow$2 (C1-7-6-2) O1$\rightarrow$16 (C1-16-17-2) O1$\rightarrow$3 (C1-16-13-3) O7$\rightarrow$8 O8$\rightarrow$3 (C1-7-8-3) O9$\rightarrow$10 O7$\rightarrow$9 (C6-10-9-7) O9$\rightarrow$11 (C9-11-10) O9$\rightarrow$8 (C7-10-9-8) S:9-8-3-5-11

\noindent
{\bf 30.} MC29 11$\rightarrow$6 O11$\rightarrow$5 (C5-11-6) O11$\rightarrow$12 (C4-12-11-5) B6$\rightarrow$10 (Copy 33) O11$\rightarrow$10 (C6-11-10) O11$\rightarrow$9 O9$\rightarrow$10 (C6-11-9-10) B8$\rightarrow$9 (Copy 34) O7$\rightarrow$10 O8$\rightarrow$7 (C7-10-9-8) O7$\rightarrow$6 (C6-11-10-7) O7$\rightarrow$1 O1$\rightarrow$2 (C1-7-6-2) O1$\rightarrow$16 (C1-16-17-2) O8$\rightarrow$3 O3$\rightarrow$1 (C1-7-8-3) S:13-3-1-16

\noindent
{\bf 31.} MC34 9$\rightarrow$8 O3$\rightarrow$8 (C3-8-9-11-5) B7$\rightarrow$10 (Copy 35) O7$\rightarrow$6 (C6-11-10-7) O7$\rightarrow$1 O1$\rightarrow$2 (C1-7-6-2) O1$\rightarrow$16 (C1-16-17-2) O7$\rightarrow$8 O3$\rightarrow$1 (C1-7-8-3) S:13-3-1-16

\noindent
{\bf 32.} MC35 10$\rightarrow$7 O6$\rightarrow$7 (C6-10-7) O9$\rightarrow$7 (C7-10-9) O8$\rightarrow$7 (C7-10-9-8) O1$\rightarrow$7 O3$\rightarrow$1 (C1-7-8-3) O1$\rightarrow$2 (C1-7-6-2) O1$\rightarrow$16 (C1-16-17-2) S:13-3-1-16

\noindent
{\bf 33.} MC33 10$\rightarrow$6 O10$\rightarrow$11 (C5-11-10-6) O10$\rightarrow$7 O7$\rightarrow$6 (C6-11-10-7) O7$\rightarrow$1 O1$\rightarrow$2 (C1-7-6-2) O1$\rightarrow$16 (C1-16-17-2) O1$\rightarrow$3 (C1-16-13-3) O7$\rightarrow$8 O8$\rightarrow$3 (C1-7-8-3) O10$\rightarrow$9 O9$\rightarrow$8 (C7-10-9-8) O7$\rightarrow$9 (C6-10-9-7) O11$\rightarrow$9 (C6-11-9-7) S:11-9-8-3-5

\noindent
{\bf 34.} MC28 17$\rightarrow$2 B16$\rightarrow$17 (Copy 36) O16$\rightarrow$1 O1$\rightarrow$2 (C1-16-17-2) O3$\rightarrow$1 (C1-16-13-3) O15$\rightarrow$17 O14$\rightarrow$15 (C14-16-17-15) O12$\rightarrow$15 (C12-15-14-13) S:4-12-15-17-2

\noindent
{\bf 35.} MC36 17$\rightarrow$16 B15$\rightarrow$17 (Copy 37) O15$\rightarrow$16 (C15-17-16) O15$\rightarrow$12 (C12-15-16-13) O15$\rightarrow$14 (C12-15-14-13) B6$\rightarrow$11 (Copy 38) O12$\rightarrow$11 (C6-13-12-11) O5$\rightarrow$11 (C4-12-11-5) O10$\rightarrow$11 O6$\rightarrow$10 (C5-11-10-6) B6$\rightarrow$7 (Copy 39) O10$\rightarrow$7 (C6-11-10-7) B7$\rightarrow$8 (Copy 40) O10$\rightarrow$9 O9$\rightarrow$8 (C7-10-9-8) O7$\rightarrow$9 (C6-10-9-7) O11$\rightarrow$9 (C6-11-9-7) O3$\rightarrow$8 (C2-6-7-8-3) S:3-5-11-9-8

\noindent
{\bf 36.} MC40 8$\rightarrow$7 B3$\rightarrow$8 (Copy 41) O1$\rightarrow$7 O3$\rightarrow$1 (C1-7-8-3) O1$\rightarrow$2 (C1-7-6-2) O1$\rightarrow$16 (C1-16-17-2) S:13-3-1-16

\noindent
{\bf 37.} MC41 8$\rightarrow$3 O8$\rightarrow$9 O9$\rightarrow$11 (C3-8-9-11-5) O9$\rightarrow$7 (C6-11-9-7) O9$\rightarrow$10 (C6-10-9-7) S:8-9-10-7

\noindent
{\bf 38.} MC39 7$\rightarrow$6 O7$\rightarrow$10 (C6-10-7) O7$\rightarrow$1 O1$\rightarrow$2 (C1-7-6-2) O1$\rightarrow$16 (C1-16-17-2) O1$\rightarrow$3 (C1-16-13-3) O7$\rightarrow$8 O8$\rightarrow$3 (C1-7-8-3) O9$\rightarrow$10 O7$\rightarrow$9 (C6-10-9-7) O9$\rightarrow$11 (C9-11-10) O9$\rightarrow$8 (C7-10-9-8) S:9-8-3-5-11

\noindent
{\bf 39.} MC38 11$\rightarrow$6 O11$\rightarrow$5 (C5-11-6) O11$\rightarrow$12 (C4-12-11-5) B6$\rightarrow$10 (Copy 42) O11$\rightarrow$10 (C6-11-10) O11$\rightarrow$9 O9$\rightarrow$10 (C6-11-9-10) B8$\rightarrow$9 (Copy 43) O7$\rightarrow$10 O8$\rightarrow$7 (C7-10-9-8) O7$\rightarrow$6 (C6-11-10-7) O7$\rightarrow$1 O1$\rightarrow$2 (C1-7-6-2) O1$\rightarrow$16 (C1-16-17-2) O8$\rightarrow$3 O3$\rightarrow$1 (C1-7-8-3) S:13-3-1-16

\noindent
{\bf 40.} MC43 9$\rightarrow$8 O3$\rightarrow$8 (C3-8-9-11-5) B7$\rightarrow$10 (Copy 44) O7$\rightarrow$6 (C6-11-10-7) O7$\rightarrow$1 O1$\rightarrow$2 (C1-7-6-2) O1$\rightarrow$16 (C1-16-17-2) O7$\rightarrow$8 O3$\rightarrow$1 (C1-7-8-3) S:13-3-1-16

\noindent
{\bf 41.} MC44 10$\rightarrow$7 O6$\rightarrow$7 (C6-10-7) O9$\rightarrow$7 (C7-10-9) O8$\rightarrow$7 (C7-10-9-8) O1$\rightarrow$7 O3$\rightarrow$1 (C1-7-8-3) O1$\rightarrow$2 (C1-7-6-2) O1$\rightarrow$16 (C1-16-17-2) S:13-3-1-16

\noindent
{\bf 42.} MC42 10$\rightarrow$6 O10$\rightarrow$11 (C5-11-10-6) O10$\rightarrow$7 O7$\rightarrow$6 (C6-11-10-7) O7$\rightarrow$1 O1$\rightarrow$2 (C1-7-6-2) O1$\rightarrow$16 (C1-16-17-2) O1$\rightarrow$3 (C1-16-13-3) O7$\rightarrow$8 O8$\rightarrow$3 (C1-7-8-3) O10$\rightarrow$9 O9$\rightarrow$8 (C7-10-9-8) O7$\rightarrow$9 (C6-10-9-7) O11$\rightarrow$9 (C6-11-9-7) S:11-9-8-3-5

\noindent
{\bf 43.} MC37 17$\rightarrow$15 O14$\rightarrow$15 (C14-16-17-15) O12$\rightarrow$15 (C12-15-14-13) O16$\rightarrow$15 (C12-15-16-13) B6$\rightarrow$11 (Copy 45) O12$\rightarrow$11 (C6-13-12-11) O5$\rightarrow$11 (C4-12-11-5) O10$\rightarrow$11 O6$\rightarrow$10 (C5-11-10-6) B6$\rightarrow$7 (Copy 46) O10$\rightarrow$7 (C6-11-10-7) B7$\rightarrow$8 (Copy 47) O10$\rightarrow$9 O9$\rightarrow$8 (C7-10-9-8) O7$\rightarrow$9 (C6-10-9-7) O11$\rightarrow$9 (C6-11-9-7) O3$\rightarrow$8 (C2-6-7-8-3) S:3-5-11-9-8

\noindent
{\bf 44.} MC47 8$\rightarrow$7 B3$\rightarrow$8 (Copy 48) O1$\rightarrow$7 O3$\rightarrow$1 (C1-7-8-3) O1$\rightarrow$2 (C1-7-6-2) O1$\rightarrow$16 (C1-16-17-2) S:13-3-1-16

\noindent
{\bf 45.} MC48 8$\rightarrow$3 O8$\rightarrow$9 O9$\rightarrow$11 (C3-8-9-11-5) O9$\rightarrow$7 (C6-11-9-7) O9$\rightarrow$10 (C6-10-9-7) S:8-9-10-7

\noindent
{\bf 46.} MC46 7$\rightarrow$6 O7$\rightarrow$10 (C6-10-7) O7$\rightarrow$1 O1$\rightarrow$2 (C1-7-6-2) O1$\rightarrow$16 (C1-16-17-2) O1$\rightarrow$3 (C1-16-13-3) O7$\rightarrow$8 O8$\rightarrow$3 (C1-7-8-3) O9$\rightarrow$10 O7$\rightarrow$9 (C6-10-9-7) O9$\rightarrow$11 (C9-11-10) O9$\rightarrow$8 (C7-10-9-8) S:9-8-3-5-11

\noindent
{\bf 47.} MC45 11$\rightarrow$6 O11$\rightarrow$5 (C5-11-6) O11$\rightarrow$12 (C4-12-11-5) B6$\rightarrow$10 (Copy 49) O11$\rightarrow$10 (C6-11-10) O11$\rightarrow$9 O9$\rightarrow$10 (C6-11-9-10) B8$\rightarrow$9 (Copy 50) O7$\rightarrow$10 O8$\rightarrow$7 (C7-10-9-8) O7$\rightarrow$6 (C6-11-10-7) O7$\rightarrow$1 O1$\rightarrow$2 (C1-7-6-2) O1$\rightarrow$16 (C1-16-17-2) O8$\rightarrow$3 O3$\rightarrow$1 (C1-7-8-3) S:13-3-1-16

\noindent
{\bf 48.} MC50 9$\rightarrow$8 O3$\rightarrow$8 (C3-8-9-11-5) B7$\rightarrow$10 (Copy 51) O7$\rightarrow$6 (C6-11-10-7) O7$\rightarrow$1 O1$\rightarrow$2 (C1-7-6-2) O1$\rightarrow$16 (C1-16-17-2) O7$\rightarrow$8 O3$\rightarrow$1 (C1-7-8-3) S:13-3-1-16

\noindent
{\bf 49.} MC51 10$\rightarrow$7 O6$\rightarrow$7 (C6-10-7) O9$\rightarrow$7 (C7-10-9) O8$\rightarrow$7 (C7-10-9-8) O1$\rightarrow$7 O3$\rightarrow$1 (C1-7-8-3) O1$\rightarrow$2 (C1-7-6-2) O1$\rightarrow$16 (C1-16-17-2) S:13-3-1-16

\noindent
{\bf 50.} MC49 10$\rightarrow$6 O10$\rightarrow$11 (C5-11-10-6) O10$\rightarrow$7 O7$\rightarrow$6 (C6-11-10-7) O7$\rightarrow$1 O1$\rightarrow$2 (C1-7-6-2) O1$\rightarrow$16 (C1-16-17-2) O1$\rightarrow$3 (C1-16-13-3) O7$\rightarrow$8 O8$\rightarrow$3 (C1-7-8-3) O10$\rightarrow$9 O9$\rightarrow$8 (C7-10-9-8) O7$\rightarrow$9 (C6-10-9-7) O11$\rightarrow$9 (C6-11-9-7) S:11-9-8-3-5

\noindent
{\bf 51.} MC2 16$\rightarrow$14 O12$\rightarrow$14 (C12-14-16-13) O12$\rightarrow$4 (C4-13-14-12) B5$\rightarrow$6 (Copy 52) O5$\rightarrow$3 (C3-13-6-5) O5$\rightarrow$4 (C3-13-4-5) B2$\rightarrow$6 (Copy 53) O2$\rightarrow$3 (C2-6-5-3) O2$\rightarrow$4 (C2-4-5-3) B2$\rightarrow$17 (Copy 54) B16$\rightarrow$17 (Copy 55) B15$\rightarrow$17 (Copy 56) O15$\rightarrow$14 (C14-16-17-15) O15$\rightarrow$12 (C12-15-14-13) O15$\rightarrow$16 (C12-15-16-13) B6$\rightarrow$11 (Copy 57) O5$\rightarrow$11 (C5-11-6) O12$\rightarrow$11 (C4-12-11-5) B6$\rightarrow$10 (Copy 58) O11$\rightarrow$10 (C5-11-10-6) O7$\rightarrow$10 O6$\rightarrow$7 (C6-11-10-7) O1$\rightarrow$7 O2$\rightarrow$1 (C1-7-6-2) O16$\rightarrow$1 (C1-16-17-2) O3$\rightarrow$1 (C1-16-13-3) O8$\rightarrow$7 O3$\rightarrow$8 (C1-7-8-3) O9$\rightarrow$10 O8$\rightarrow$9 (C7-10-9-8) O9$\rightarrow$7 (C6-10-9-7) O9$\rightarrow$11 (C6-11-9-7) S:5-3-8-9-11

\noindent
{\bf 52.} MC58 10$\rightarrow$6 O10$\rightarrow$11 (C6-11-10) O9$\rightarrow$11 O10$\rightarrow$9 (C6-11-9-10) B8$\rightarrow$9 (Copy 59) O8$\rightarrow$3 (C3-8-9-11-5) B7$\rightarrow$10 (Copy 60) O7$\rightarrow$6 (C6-10-7) O7$\rightarrow$9 (C7-10-9) O7$\rightarrow$8 (C7-10-9-8) O7$\rightarrow$1 O1$\rightarrow$3 (C1-7-8-3) O2$\rightarrow$1 (C1-7-6-2) O16$\rightarrow$1 (C1-16-17-2) S:13-16-1-3

\noindent
{\bf 53.} MC60 10$\rightarrow$7 O6$\rightarrow$7 (C6-11-10-7) O1$\rightarrow$7 O2$\rightarrow$1 (C1-7-6-2) O16$\rightarrow$1 (C1-16-17-2) O8$\rightarrow$7 O1$\rightarrow$3 (C1-7-8-3) S:13-16-1-3

\noindent
{\bf 54.} MC59 9$\rightarrow$8 O10$\rightarrow$7 O7$\rightarrow$8 (C7-10-9-8) O6$\rightarrow$7 (C6-11-10-7) O1$\rightarrow$7 O2$\rightarrow$1 (C1-7-6-2) O16$\rightarrow$1 (C1-16-17-2) O3$\rightarrow$8 O1$\rightarrow$3 (C1-7-8-3) S:13-16-1-3

\noindent
{\bf 55.} MC57 11$\rightarrow$6 O11$\rightarrow$12 (C6-13-12-11) O11$\rightarrow$5 (C4-12-11-5) O11$\rightarrow$10 O10$\rightarrow$6 (C5-11-10-6) B6$\rightarrow$7 (Copy 61) O10$\rightarrow$7 (C6-10-7) O1$\rightarrow$7 O2$\rightarrow$1 (C1-7-6-2) O16$\rightarrow$1 (C1-16-17-2) O3$\rightarrow$1 (C1-16-13-3) O8$\rightarrow$7 O3$\rightarrow$8 (C1-7-8-3) O10$\rightarrow$9 O9$\rightarrow$7 (C6-10-9-7) O11$\rightarrow$9 (C9-11-10) O8$\rightarrow$9 (C7-10-9-8) S:11-5-3-8-9

\noindent
{\bf 56.} MC61 7$\rightarrow$6 O7$\rightarrow$10 (C6-11-10-7) B7$\rightarrow$8 (Copy 62) B3$\rightarrow$8 (Copy 63) O9$\rightarrow$8 O11$\rightarrow$9 (C3-8-9-11-5) O7$\rightarrow$9 (C6-11-9-7) O10$\rightarrow$9 (C6-10-9-7) S:7-10-9-8

\noindent
{\bf 57.} MC63 8$\rightarrow$3 O7$\rightarrow$1 O1$\rightarrow$3 (C1-7-8-3) O2$\rightarrow$1 (C1-7-6-2) O16$\rightarrow$1 (C1-16-17-2) S:13-16-1-3

\noindent
{\bf 58.} MC62 8$\rightarrow$7 O9$\rightarrow$10 O8$\rightarrow$9 (C7-10-9-8) O9$\rightarrow$7 (C6-10-9-7) O9$\rightarrow$11 (C6-11-9-7) O8$\rightarrow$3 (C2-6-7-8-3) S:8-9-11-5-3

\noindent
{\bf 59.} MC56 17$\rightarrow$15 O16$\rightarrow$15 (C15-17-16) O12$\rightarrow$15 (C12-15-16-13) O14$\rightarrow$15 (C12-15-14-13) B6$\rightarrow$11 (Copy 64) O5$\rightarrow$11 (C5-11-6) O12$\rightarrow$11 (C4-12-11-5) B6$\rightarrow$10 (Copy 65) O11$\rightarrow$10 (C5-11-10-6) O7$\rightarrow$10 O6$\rightarrow$7 (C6-11-10-7) O1$\rightarrow$7 O2$\rightarrow$1 (C1-7-6-2) O16$\rightarrow$1 (C1-16-17-2) O3$\rightarrow$1 (C1-16-13-3) O8$\rightarrow$7 O3$\rightarrow$8 (C1-7-8-3) O9$\rightarrow$10 O8$\rightarrow$9 (C7-10-9-8) O9$\rightarrow$7 (C6-10-9-7) O9$\rightarrow$11 (C6-11-9-7) S:5-3-8-9-11

\noindent
{\bf 60.} MC65 10$\rightarrow$6 O10$\rightarrow$11 (C6-11-10) O9$\rightarrow$11 O10$\rightarrow$9 (C6-11-9-10) B8$\rightarrow$9 (Copy 66) O8$\rightarrow$3 (C3-8-9-11-5) B7$\rightarrow$10 (Copy 67) O7$\rightarrow$6 (C6-10-7) O7$\rightarrow$9 (C7-10-9) O7$\rightarrow$8 (C7-10-9-8) O7$\rightarrow$1 O1$\rightarrow$3 (C1-7-8-3) O2$\rightarrow$1 (C1-7-6-2) O16$\rightarrow$1 (C1-16-17-2) S:13-16-1-3

\noindent
{\bf 61.} MC67 10$\rightarrow$7 O6$\rightarrow$7 (C6-11-10-7) O1$\rightarrow$7 O2$\rightarrow$1 (C1-7-6-2) O16$\rightarrow$1 (C1-16-17-2) O8$\rightarrow$7 O1$\rightarrow$3 (C1-7-8-3) S:13-16-1-3

\noindent
{\bf 62.} MC66 9$\rightarrow$8 O10$\rightarrow$7 O7$\rightarrow$8 (C7-10-9-8) O6$\rightarrow$7 (C6-11-10-7) O1$\rightarrow$7 O2$\rightarrow$1 (C1-7-6-2) O16$\rightarrow$1 (C1-16-17-2) O3$\rightarrow$8 O1$\rightarrow$3 (C1-7-8-3) S:13-16-1-3

\noindent
{\bf 63.} MC64 11$\rightarrow$6 O11$\rightarrow$12 (C6-13-12-11) O11$\rightarrow$5 (C4-12-11-5) O11$\rightarrow$10 O10$\rightarrow$6 (C5-11-10-6) B6$\rightarrow$7 (Copy 68) O10$\rightarrow$7 (C6-10-7) O1$\rightarrow$7 O2$\rightarrow$1 (C1-7-6-2) O16$\rightarrow$1 (C1-16-17-2) O3$\rightarrow$1 (C1-16-13-3) O8$\rightarrow$7 O3$\rightarrow$8 (C1-7-8-3) O10$\rightarrow$9 O9$\rightarrow$7 (C6-10-9-7) O11$\rightarrow$9 (C9-11-10) O8$\rightarrow$9 (C7-10-9-8) S:11-5-3-8-9

\noindent
{\bf 64.} MC68 7$\rightarrow$6 O7$\rightarrow$10 (C6-11-10-7) B7$\rightarrow$8 (Copy 69) B3$\rightarrow$8 (Copy 70) O9$\rightarrow$8 O11$\rightarrow$9 (C3-8-9-11-5) O7$\rightarrow$9 (C6-11-9-7) O10$\rightarrow$9 (C6-10-9-7) S:7-10-9-8

\noindent
{\bf 65.} MC70 8$\rightarrow$3 O7$\rightarrow$1 O1$\rightarrow$3 (C1-7-8-3) O2$\rightarrow$1 (C1-7-6-2) O16$\rightarrow$1 (C1-16-17-2) S:13-16-1-3

\noindent
{\bf 66.} MC69 8$\rightarrow$7 O9$\rightarrow$10 O8$\rightarrow$9 (C7-10-9-8) O9$\rightarrow$7 (C6-10-9-7) O9$\rightarrow$11 (C6-11-9-7) O8$\rightarrow$3 (C2-6-7-8-3) S:8-9-11-5-3

\noindent
{\bf 67.} MC55 17$\rightarrow$16 O1$\rightarrow$16 O2$\rightarrow$1 (C1-16-17-2) O1$\rightarrow$3 (C1-16-13-3) O17$\rightarrow$15 O15$\rightarrow$14 (C14-16-17-15) O15$\rightarrow$12 (C12-15-14-13) S:2-17-15-12-4

\noindent
{\bf 68.} MC54 17$\rightarrow$2 O17$\rightarrow$16 (C2-17-16-13-3) O17$\rightarrow$15 O15$\rightarrow$14 (C14-16-17-15) O15$\rightarrow$12 (C12-15-14-13) O15$\rightarrow$16 (C12-15-16-13) B6$\rightarrow$11 (Copy 71) O5$\rightarrow$11 (C5-11-6) O12$\rightarrow$11 (C4-12-11-5) B6$\rightarrow$10 (Copy 72) O11$\rightarrow$10 (C5-11-10-6) O7$\rightarrow$10 O6$\rightarrow$7 (C6-11-10-7) O1$\rightarrow$7 O2$\rightarrow$1 (C1-7-6-2) O16$\rightarrow$1 (C1-16-17-2) O3$\rightarrow$1 (C1-16-13-3) O8$\rightarrow$7 O3$\rightarrow$8 (C1-7-8-3) O9$\rightarrow$10 O8$\rightarrow$9 (C7-10-9-8) O9$\rightarrow$7 (C6-10-9-7) O9$\rightarrow$11 (C6-11-9-7) S:5-3-8-9-11

\noindent
{\bf 69.} MC72 10$\rightarrow$6 O10$\rightarrow$11 (C6-11-10) O9$\rightarrow$11 O10$\rightarrow$9 (C6-11-9-10) B8$\rightarrow$9 (Copy 73) O8$\rightarrow$3 (C3-8-9-11-5) B7$\rightarrow$10 (Copy 74) O7$\rightarrow$6 (C6-10-7) O7$\rightarrow$9 (C7-10-9) O7$\rightarrow$8 (C7-10-9-8) O7$\rightarrow$1 O1$\rightarrow$3 (C1-7-8-3) O2$\rightarrow$1 (C1-7-6-2) O16$\rightarrow$1 (C1-16-17-2) S:13-16-1-3

\noindent
{\bf 70.} MC74 10$\rightarrow$7 O6$\rightarrow$7 (C6-11-10-7) O1$\rightarrow$7 O2$\rightarrow$1 (C1-7-6-2) O16$\rightarrow$1 (C1-16-17-2) O8$\rightarrow$7 O1$\rightarrow$3 (C1-7-8-3) S:13-16-1-3

\noindent
{\bf 71.} MC73 9$\rightarrow$8 O10$\rightarrow$7 O7$\rightarrow$8 (C7-10-9-8) O6$\rightarrow$7 (C6-11-10-7) O1$\rightarrow$7 O2$\rightarrow$1 (C1-7-6-2) O16$\rightarrow$1 (C1-16-17-2) O3$\rightarrow$8 O1$\rightarrow$3 (C1-7-8-3) S:13-16-1-3

\noindent
{\bf 72.} MC71 11$\rightarrow$6 O11$\rightarrow$12 (C6-13-12-11) O11$\rightarrow$5 (C4-12-11-5) O11$\rightarrow$10 O10$\rightarrow$6 (C5-11-10-6) B6$\rightarrow$7 (Copy 75) O10$\rightarrow$7 (C6-10-7) O1$\rightarrow$7 O2$\rightarrow$1 (C1-7-6-2) O16$\rightarrow$1 (C1-16-17-2) O3$\rightarrow$1 (C1-16-13-3) O8$\rightarrow$7 O3$\rightarrow$8 (C1-7-8-3) O10$\rightarrow$9 O9$\rightarrow$7 (C6-10-9-7) O11$\rightarrow$9 (C9-11-10) O8$\rightarrow$9 (C7-10-9-8) S:11-5-3-8-9

\noindent
{\bf 73.} MC75 7$\rightarrow$6 O7$\rightarrow$10 (C6-11-10-7) B7$\rightarrow$8 (Copy 76) B3$\rightarrow$8 (Copy 77) O9$\rightarrow$8 O11$\rightarrow$9 (C3-8-9-11-5) O7$\rightarrow$9 (C6-11-9-7) O10$\rightarrow$9 (C6-10-9-7) S:7-10-9-8

\noindent
{\bf 74.} MC77 8$\rightarrow$3 O7$\rightarrow$1 O1$\rightarrow$3 (C1-7-8-3) O2$\rightarrow$1 (C1-7-6-2) O16$\rightarrow$1 (C1-16-17-2) S:13-16-1-3

\noindent
{\bf 75.} MC76 8$\rightarrow$7 O9$\rightarrow$10 O8$\rightarrow$9 (C7-10-9-8) O9$\rightarrow$7 (C6-10-9-7) O9$\rightarrow$11 (C6-11-9-7) O8$\rightarrow$3 (C2-6-7-8-3) S:8-9-11-5-3

\noindent
{\bf 76.} MC53 6$\rightarrow$2 O3$\rightarrow$2 (C2-6-5-3) O4$\rightarrow$2 (C2-4-5-3) B2$\rightarrow$17 (Copy 78) O16$\rightarrow$17 (C2-17-16-13-3) O15$\rightarrow$17 O12$\rightarrow$15 (C2-17-15-12-4) O14$\rightarrow$15 (C12-15-14-13) S:16-14-15-17

\noindent
{\bf 77.} MC78 17$\rightarrow$2 B16$\rightarrow$17 (Copy 79) O16$\rightarrow$1 O1$\rightarrow$2 (C1-16-17-2) O3$\rightarrow$1 (C1-16-13-3) B6$\rightarrow$11 (Copy 80) O5$\rightarrow$11 (C5-11-6) O12$\rightarrow$11 (C4-12-11-5) B1$\rightarrow$7 (Copy 81) O6$\rightarrow$7 (C1-7-6-2) O8$\rightarrow$7 O3$\rightarrow$8 (C1-7-8-3) B7$\rightarrow$10 (Copy 82) O6$\rightarrow$10 (C6-10-7) O11$\rightarrow$10 (C5-11-10-6) O9$\rightarrow$10 O8$\rightarrow$9 (C7-10-9-8) O9$\rightarrow$7 (C6-10-9-7) O9$\rightarrow$11 (C6-11-9-7) S:5-3-8-9-11

\noindent
{\bf 78.} MC82 10$\rightarrow$7 O10$\rightarrow$11 (C6-11-10-7) O10$\rightarrow$6 (C5-11-10-6) O10$\rightarrow$9 O9$\rightarrow$7 (C6-10-9-7) O9$\rightarrow$11 (C6-11-9-7) O8$\rightarrow$9 (C7-10-9-8) S:5-3-8-9-11

\noindent
{\bf 79.} MC81 7$\rightarrow$1 O7$\rightarrow$6 (C1-7-6-2) O9$\rightarrow$11 O7$\rightarrow$9 (C6-11-9-7) O10$\rightarrow$11 O7$\rightarrow$10 (C6-11-10-7) O10$\rightarrow$6 (C5-11-10-6) O10$\rightarrow$9 (C6-10-9-7) O8$\rightarrow$9 O7$\rightarrow$8 (C7-10-9-8) O3$\rightarrow$8 (C1-7-8-3) S:5-3-8-9-11

\noindent
{\bf 80.} MC80 11$\rightarrow$6 O11$\rightarrow$12 (C6-13-12-11) O11$\rightarrow$5 (C4-12-11-5) O11$\rightarrow$10 O10$\rightarrow$6 (C5-11-10-6) B9$\rightarrow$10 (Copy 83) O9$\rightarrow$7 O7$\rightarrow$6 (C6-10-9-7) O7$\rightarrow$1 (C1-7-6-2) O9$\rightarrow$11 (C6-11-9-7) O7$\rightarrow$10 (C6-11-10-7) O8$\rightarrow$3 O9$\rightarrow$8 (C3-8-9-11-5) O8$\rightarrow$7 (C1-7-8-3) S:9-8-7-10

\noindent
{\bf 81.} MC83 10$\rightarrow$9 O11$\rightarrow$9 (C9-11-10) B1$\rightarrow$7 (Copy 84) O6$\rightarrow$7 (C1-7-6-2) O10$\rightarrow$7 (C6-10-7) O8$\rightarrow$7 O3$\rightarrow$8 (C1-7-8-3) O9$\rightarrow$7 (C6-10-9-7) O8$\rightarrow$9 (C7-10-9-8) S:11-5-3-8-9

\noindent
{\bf 82.} MC84 7$\rightarrow$1 O7$\rightarrow$6 (C1-7-6-2) O7$\rightarrow$9 (C6-10-9-7) O7$\rightarrow$10 (C6-11-10-7) O8$\rightarrow$9 O7$\rightarrow$8 (C7-10-9-8) O3$\rightarrow$8 (C1-7-8-3) S:11-5-3-8-9

\noindent
{\bf 83.} MC79 17$\rightarrow$16 O17$\rightarrow$15 O15$\rightarrow$14 (C14-16-17-15) O15$\rightarrow$12 (C12-15-14-13) S:17-15-12-4-2

\noindent
{\bf 84.} MC52 6$\rightarrow$5 O3$\rightarrow$5 (C3-13-6-5) O4$\rightarrow$5 (C3-13-4-5) O12$\rightarrow$11 O11$\rightarrow$5 (C4-12-11-5) O6$\rightarrow$11 (C6-13-12-11) B2$\rightarrow$6 (Copy 85) O2$\rightarrow$3 (C2-6-5-3) O2$\rightarrow$4 (C2-4-5-3) B6$\rightarrow$10 (Copy 86) O11$\rightarrow$10 (C5-11-10-6) O7$\rightarrow$10 O6$\rightarrow$7 (C6-11-10-7) O1$\rightarrow$7 O2$\rightarrow$1 (C1-7-6-2) B1$\rightarrow$16 (Copy 87) O17$\rightarrow$16 O2$\rightarrow$17 (C1-16-17-2) O1$\rightarrow$3 (C1-16-13-3) O17$\rightarrow$15 O15$\rightarrow$14 (C14-16-17-15) O15$\rightarrow$12 (C12-15-14-13) S:2-17-15-12-4

\noindent
{\bf 85.} MC87 16$\rightarrow$1 O3$\rightarrow$1 (C1-16-13-3) O8$\rightarrow$7 O3$\rightarrow$8 (C1-7-8-3) O9$\rightarrow$10 O8$\rightarrow$9 (C7-10-9-8) O9$\rightarrow$7 (C6-10-9-7) O9$\rightarrow$11 (C6-11-9-7) S:3-8-9-11-5

\noindent
{\bf 86.} MC86 10$\rightarrow$6 O10$\rightarrow$11 (C6-11-10) O9$\rightarrow$11 O10$\rightarrow$9 (C6-11-9-10) B8$\rightarrow$9 (Copy 88) O8$\rightarrow$3 (C3-8-9-11-5) B15$\rightarrow$16 (Copy 89) O15$\rightarrow$14 (C14-16-15) O15$\rightarrow$12 (C12-15-14-13) B2$\rightarrow$17 (Copy 90) O15$\rightarrow$17 (C2-17-15-12-4) O16$\rightarrow$17 (C14-16-17-15) B7$\rightarrow$10 (Copy 91) O7$\rightarrow$6 (C6-10-7) O7$\rightarrow$9 (C7-10-9) O7$\rightarrow$8 (C7-10-9-8) O7$\rightarrow$1 O1$\rightarrow$3 (C1-7-8-3) O2$\rightarrow$1 (C1-7-6-2) O16$\rightarrow$1 (C1-16-17-2) S:13-16-1-3

\noindent
{\bf 87.} MC91 10$\rightarrow$7 O6$\rightarrow$7 (C6-11-10-7) O1$\rightarrow$7 O2$\rightarrow$1 (C1-7-6-2) O16$\rightarrow$1 (C1-16-17-2) O8$\rightarrow$7 O1$\rightarrow$3 (C1-7-8-3) S:13-16-1-3

\noindent
{\bf 88.} MC90 17$\rightarrow$2 O17$\rightarrow$16 (C2-17-16-13-3) O17$\rightarrow$15 (C14-16-17-15) B7$\rightarrow$10 (Copy 92) O7$\rightarrow$6 (C6-10-7) O7$\rightarrow$9 (C7-10-9) O7$\rightarrow$8 (C7-10-9-8) O7$\rightarrow$1 O1$\rightarrow$3 (C1-7-8-3) O2$\rightarrow$1 (C1-7-6-2) O16$\rightarrow$1 (C1-16-17-2) S:13-16-1-3

\noindent
{\bf 89.} MC92 10$\rightarrow$7 O6$\rightarrow$7 (C6-11-10-7) O1$\rightarrow$7 O2$\rightarrow$1 (C1-7-6-2) O16$\rightarrow$1 (C1-16-17-2) O8$\rightarrow$7 O1$\rightarrow$3 (C1-7-8-3) S:13-16-1-3

\noindent
{\bf 90.} MC89 16$\rightarrow$15 O12$\rightarrow$15 (C12-15-16-13) O14$\rightarrow$15 (C12-15-14-13) O16$\rightarrow$17 O17$\rightarrow$15 (C14-16-17-15) O2$\rightarrow$17 (C2-17-16-13-3) B7$\rightarrow$10 (Copy 93) O7$\rightarrow$6 (C6-10-7) O7$\rightarrow$9 (C7-10-9) O7$\rightarrow$8 (C7-10-9-8) O7$\rightarrow$1 O1$\rightarrow$3 (C1-7-8-3) O2$\rightarrow$1 (C1-7-6-2) O16$\rightarrow$1 (C1-16-17-2) S:13-16-1-3

\noindent
{\bf 91.} MC93 10$\rightarrow$7 O6$\rightarrow$7 (C6-11-10-7) O1$\rightarrow$7 O2$\rightarrow$1 (C1-7-6-2) O16$\rightarrow$1 (C1-16-17-2) O8$\rightarrow$7 O1$\rightarrow$3 (C1-7-8-3) S:13-16-1-3

\noindent
{\bf 92.} MC88 9$\rightarrow$8 O10$\rightarrow$7 O7$\rightarrow$8 (C7-10-9-8) O6$\rightarrow$7 (C6-11-10-7) O1$\rightarrow$7 O2$\rightarrow$1 (C1-7-6-2) O3$\rightarrow$8 O1$\rightarrow$3 (C1-7-8-3) O1$\rightarrow$16 (C1-16-13-3) O17$\rightarrow$16 O2$\rightarrow$17 (C1-16-17-2) O9$\rightarrow$7 (C6-10-9-7) O17$\rightarrow$15 O15$\rightarrow$14 (C14-16-17-15) O15$\rightarrow$12 (C12-15-14-13) S:2-17-15-12-4

\noindent
{\bf 93.} MC85 6$\rightarrow$2 O3$\rightarrow$2 (C2-6-5-3) O4$\rightarrow$2 (C2-4-5-3) B6$\rightarrow$10 (Copy 94) O11$\rightarrow$10 (C5-11-10-6) O7$\rightarrow$10 O6$\rightarrow$7 (C6-11-10-7) B7$\rightarrow$8 (Copy 95) O3$\rightarrow$8 (C2-6-7-8-3) B15$\rightarrow$16 (Copy 96) O15$\rightarrow$14 (C14-16-15) O15$\rightarrow$12 (C12-15-14-13) O17$\rightarrow$2 O15$\rightarrow$17 (C2-17-15-12-4) O16$\rightarrow$17 (C14-16-17-15) O16$\rightarrow$1 O1$\rightarrow$2 (C1-16-17-2) O1$\rightarrow$7 (C1-7-6-2) O1$\rightarrow$3 (C1-7-8-3) S:13-16-1-3

\noindent
{\bf 94.} MC96 16$\rightarrow$15 O12$\rightarrow$15 (C12-15-16-13) O14$\rightarrow$15 (C12-15-14-13) O16$\rightarrow$17 O17$\rightarrow$15 (C14-16-17-15) O17$\rightarrow$2 (C2-17-15-12-4) O16$\rightarrow$1 O1$\rightarrow$2 (C1-16-17-2) O1$\rightarrow$7 (C1-7-6-2) O1$\rightarrow$3 (C1-7-8-3) S:13-16-1-3

\noindent
{\bf 95.} MC95 8$\rightarrow$7 O9$\rightarrow$10 O8$\rightarrow$9 (C7-10-9-8) O9$\rightarrow$7 (C6-10-9-7) O9$\rightarrow$11 (C6-11-9-7) O8$\rightarrow$3 (C3-8-9-11-5) B12$\rightarrow$15 (Copy 97) O14$\rightarrow$15 (C12-15-14-13) O16$\rightarrow$15 (C14-16-15) O16$\rightarrow$17 O17$\rightarrow$15 (C14-16-17-15) O17$\rightarrow$2 (C2-17-15-12-4) O16$\rightarrow$1 O1$\rightarrow$2 (C1-16-17-2) O1$\rightarrow$7 (C1-7-6-2) O1$\rightarrow$3 (C1-7-8-3) S:13-16-1-3

\noindent
{\bf 96.} MC97 15$\rightarrow$12 O15$\rightarrow$14 (C12-15-14) O15$\rightarrow$16 (C12-15-16-13) O17$\rightarrow$2 O15$\rightarrow$17 (C2-17-15-12-4) O16$\rightarrow$17 (C14-16-17-15) O16$\rightarrow$1 O1$\rightarrow$2 (C1-16-17-2) O1$\rightarrow$7 (C1-7-6-2) O1$\rightarrow$3 (C1-7-8-3) S:13-16-1-3

\noindent
{\bf 97.} MC94 10$\rightarrow$6 O10$\rightarrow$11 (C6-11-10) O9$\rightarrow$11 O10$\rightarrow$9 (C6-11-9-10) B8$\rightarrow$9 (Copy 98) O8$\rightarrow$3 (C3-8-9-11-5) B15$\rightarrow$16 (Copy 99) O15$\rightarrow$14 (C14-16-15) O15$\rightarrow$12 (C12-15-14-13) O17$\rightarrow$2 O15$\rightarrow$17 (C2-17-15-12-4) O16$\rightarrow$17 (C14-16-17-15) O16$\rightarrow$1 O1$\rightarrow$2 (C1-16-17-2) O3$\rightarrow$1 (C1-16-13-3) O7$\rightarrow$1 O8$\rightarrow$7 (C1-7-8-3) O7$\rightarrow$6 (C1-7-6-2) O7$\rightarrow$9 (C6-10-9-7) O7$\rightarrow$10 (C6-11-10-7) S:8-7-10-9

\noindent
{\bf 98.} MC99 16$\rightarrow$15 O12$\rightarrow$15 (C12-15-16-13) O14$\rightarrow$15 (C12-15-14-13) O16$\rightarrow$17 O17$\rightarrow$15 (C14-16-17-15) O17$\rightarrow$2 (C2-17-15-12-4) O16$\rightarrow$1 O1$\rightarrow$2 (C1-16-17-2) O3$\rightarrow$1 (C1-16-13-3) O7$\rightarrow$1 O8$\rightarrow$7 (C1-7-8-3) O7$\rightarrow$6 (C1-7-6-2) O7$\rightarrow$9 (C6-10-9-7) O7$\rightarrow$10 (C6-11-10-7) S:8-7-10-9

\noindent
{\bf 99.} MC98 9$\rightarrow$8 O10$\rightarrow$7 O7$\rightarrow$8 (C7-10-9-8) O6$\rightarrow$7 (C6-11-10-7) O9$\rightarrow$7 (C6-10-9-7) O3$\rightarrow$8 (C2-6-7-8-3) B12$\rightarrow$15 (Copy 100) O14$\rightarrow$15 (C12-15-14-13) O16$\rightarrow$15 (C14-16-15) O16$\rightarrow$17 O17$\rightarrow$15 (C14-16-17-15) O17$\rightarrow$2 (C2-17-15-12-4) O16$\rightarrow$1 O1$\rightarrow$2 (C1-16-17-2) O1$\rightarrow$7 (C1-7-6-2) O1$\rightarrow$3 (C1-7-8-3) S:13-16-1-3

\noindent
{\bf 100.} MC100 15$\rightarrow$12 O15$\rightarrow$14 (C12-15-14) O15$\rightarrow$16 (C12-15-16-13) O17$\rightarrow$2 O15$\rightarrow$17 (C2-17-15-12-4) O16$\rightarrow$17 (C14-16-17-15) O16$\rightarrow$1 O1$\rightarrow$2 (C1-16-17-2) O1$\rightarrow$7 (C1-7-6-2) O1$\rightarrow$3 (C1-7-8-3) S:13-16-1-3

\end{tiny}

\end{document}